\tikzset{
	symbol/.style={
		draw=none,
		every to/.append style={
			edge node={node [sloped, allow upside down, auto=false]{$#1$}}}
	}
}
\begin{document}
	\pdfrender{StrokeColor=black,TextRenderingMode=2,LineWidth=0.2pt}	
	
	\title{A ruled residue theorem for algebraic function fields of curves of prime degree}

	\author{Arpan Dutta}

	%------    GENERAL MACROS    -----
	\def\NZQ{\mathbb}               % the font for N,Z,Q,R,C
	\def\NN{{\NZQ N}}
	\def\QQ{{\NZQ Q}}
	\def\ZZ{{\NZQ Z}}
	\def\RR{{\NZQ R}}
	\def\CC{{\NZQ C}}
	\def\AA{{\NZQ A}}
	\def\BB{{\NZQ B}}
	\def\PP{{\NZQ P}}
	\def\FF{{\NZQ F}}
	\def\GG{{\NZQ G}}
	\def\HH{{\NZQ H}}
	\def\UU{{\NZQ U}}
	\def\P{\mathcal P}
	
	%------------------------------------------------
	% 
	%
	\let\union=\cup
	\let\sect=\cap
	\let\dirsum=\oplus
	\let\tensor=\otimes
	\let\iso=\cong
	\let\Union=\bigcup
	\let\Sect=\bigcap
	\let\Dirsum=\bigoplus
	\let\Tensor=\bigotimes
	
	\theoremstyle{plain}
	\newtheorem{Theorem}{Theorem}[section]
	\newtheorem{Lemma}[Theorem]{Lemma}
	\newtheorem{Corollary}[Theorem]{Corollary}
	\newtheorem{Proposition}[Theorem]{Proposition}
	\newtheorem{Problem}[Theorem]{}
	\newtheorem{Conjecture}[Theorem]{Conjecture}
	\newtheorem{Question}[Theorem]{Question}
	
	\theoremstyle{definition}
	\newtheorem{Example}[Theorem]{Example}
	\newtheorem{Examples}[Theorem]{Examples}
	\newtheorem{Definition}[Theorem]{Definition}
	
	\theoremstyle{remark}
	\newtheorem{Remark}[Theorem]{Remark}
	\newtheorem{Remarks}[Theorem]{Remarks}
	
	\newcommand{\trdeg}{\mbox{\rm trdeg}\,}
	\newcommand{\rr}{\mbox{\rm rat rk}\,}
	\newcommand{\sep}{\mathrm{sep}}
	\newcommand{\ac}{\mathrm{ac}}
	\newcommand{\ins}{\mathrm{ins}}
	\newcommand{\res}{\mathrm{res}}
	\newcommand{\Gal}{\mathrm{Gal}\,}
	\newcommand{\ch}{\operatorname{char}}
	\newcommand{\Aut}{\mathrm{Aut}\,}
	\newcommand{\kras}{\mathrm{kras}\,}
	\newcommand{\dist}{\mathrm{dist}\,}
	\newcommand{\ord}{\mathrm{ord}\,}

	\newcommand{\n}{\par\noindent}
	\newcommand{\nn}{\par\vskip2pt\noindent}
	\newcommand{\sn}{\par\smallskip\noindent}
	\newcommand{\mn}{\par\medskip\noindent}
	\newcommand{\bn}{\par\bigskip\noindent}
	\newcommand{\pars}{\par\smallskip}
	\newcommand{\parm}{\par\medskip}
	\newcommand{\parb}{\par\bigskip}
	\let\phi=\varphi
	\let\kappa=\varkappa
	
	\def \a {\alpha}
	\def \b {\beta}
	\def \s {\sigma}
	\def \d {\delta}
	\def \g {\gamma}
	\def \o {\omega}
	\def \l {\lambda}
	\def \th {\theta}
	\def \D {\Delta}
	\def \G {\Gamma}
	\def \O {\Omega}
	\def \L {\Lambda}
	%
	%           We print on A4 paper
	%
	\textwidth=15cm \textheight=22cm \topmargin=0.5cm
	\oddsidemargin=0.5cm \evensidemargin=0.5cm \pagestyle{plain}

	% ------    END OF GENERAL MACROS    -------

	\address{Discipline of Mathematics, School of Basic Sciences, IIT Bhubaneswar, Argul,
		Odisha, India, 752050.}
	\email{arpandutta@iitbbs.ac.in}
	
	\date{\today}
	
	\thanks{}
	
	\keywords{Valuation, minimal pairs, residue transcendental extensions, ruled residue theorem, extensions of valuations to algebraic function fields}
	
	\subjclass[2010]{12J20, 13A18, 12J25}	
	
	\maketitle

%	========================================================

\begin{abstract}
	The Ruled Residue Theorem asserts that given a ruled extension $(K|k,v)$ of valued fields, the residue field extension is also ruled. In this paper we analyse the failure of this theorem when we set $K$ to be algebraic function fields of certain curves of prime degree $p$, provided $p$ is coprime to the residue characteristic and $k$ contains a primitive $p$-th root of unity. Specifically, we consider function fields of the form $K= k(X)(\sqrt[p]{aX^p+bX+c})$ where $a\neq 0$. We provide necessary conditions for the residue field extension to be non-ruled which are formulated only in terms of the values of the coefficients. This provides a far-reaching generalization of a certain important result regarding non-ruled extensions for function fields of smooth projective conics.   
\end{abstract}

	%=========================================================

\section{Introduction}

In this present paper we work with Krull valuations and their extensions to algebraic function fields. We refer the reader to [\ref{ZS2}] for relevant background on valuation theory. An extension of valued fields $(K|k,v)$ is a field extension $K|k$, $v$ is a valuation on $K$ and $k$ is equipped with the restricted valuation. The value group of a valued field $(k,v)$ is denoted by $vk$ and the residue field by $kv$. The value of an element $a$ is denoted by $va$ and its residue by $av$. Given two subfields $F_1$ and $F_2$ of an overfield $\O$, we denote their compositum by $F_1.F_2$.

\pars Before we can formulate the primary motivation behind this article, we need to make a few definitions. An \emph{algebraic function field $K|k$ of dimension $n$} is a finitely generated field extension $K|k$ with $\trdeg[K:k]=n$. $K|k$ is said to be \textbf{ruled} if $K$ is purely transcendental over a \emph{finite} extension of $k$. Assume that $K|k$ has dimension one and let $v$ be a valuation on $K$. We say that the valued field extension $(K|k,v)$ is \emph{residue transcendental} if the residue field extension $Kv|kv$ satisfies $\trdeg[Kv:kv] = \trdeg[K:k]=1$. The Ruled Residue Theorem, first proved by Ohm [\ref{Ohm}], asserts that if $K|k$ is ruled, of dimension one and $(K|k,v)$ is residue transcendental, then the residue field extension $Kv|kv$ is also ruled. 

\pars Our goal in this article is to investigate the failure of the Ruled Residue Theorem for function fields of curves of prime degrees. Specifically, we consider function fields of the form $K:= k(X)(\sqrt[p]{aX^p + bX +c})$, where $X$ is transcendental over $k$, $p$ is a prime such that $p\neq\ch k$, $k$ contains a primitive $p$-th root of unity and $aX^p+bX+c \in k[X]$ is a polynomial of degree $p$. An initial result in this direction was obtained in [\ref{Gupta Becher ruled residue conics}] where the authors prove the following:

\begin{Theorem}
	Assume that $K|k$ is the function field of a smooth projective conic and let $v$ be a valuation on $k$ with $\ch kv\neq 2$. Then $v$ has at most one residue transcendental extension to $K$ such that $Kv|kv$ is not ruled.    
\end{Theorem}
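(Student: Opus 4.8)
The plan is to study the extensions through the tower $k \subseteq k(X) \subseteq K$, where after a choice of coordinate $K = k(X)(\sqrt{f})$ with $f = AX^2 + BX + C$, $A \neq 0$ and $f$ separable. If the conic is isotropic then $K = k(X)$ is rational and every residue-transcendental extension is ruled by Ohm's theorem, so there is nothing to prove; thus I assume $K|k(X)$ is genuinely quadratic. First I would note that any residue-transcendental extension $u$ of $v$ to $K$ restricts to a residue-transcendental extension $w$ of $v$ to $k(X)$, and that $Ku|k(X)w$ has degree at most $2$. By the Ruled Residue Theorem applied to $k(X)$, the field $k(X)w$ is ruled, say $k(X)w = F(\eta)$ with $F|kv$ finite and $\eta$ transcendental. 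Hence $Ku$ can fail to be ruled only if it is a genuine quadratic extension $F(\eta)(\sqrt{h})$; since a function field of genus zero is ruled exactly when the corresponding conic is isotropic over its constant field, $Ku$ is non-ruled precisely when $h$ is, up to squares, a separable quadratic in $\eta$ whose associated conic is anisotropic over $F$.

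Next I would make this explicit via the minimal-pair description of $w$. Writing $w$ as the valuation attached to a minimal pair $(\alpha,\gamma)$ and expanding $f(X) = A(X-\alpha)^2 + f'(\alpha)(X-\alpha) + f(\alpha)$, one gets $w(f) = \min\{v_\alpha(A)+2\gamma,\, v_\alpha(f'(\alpha))+\gamma,\, v_\alpha(f(\alpha))\}$, where $v_\alpha$ extends $v$ to $k(\alpha)$. When the ramification index $e$ of $w|v$ equals $1$ the residue variable may be taken to be $\eta = \overline{(X-\alpha)/d}$ with $v_\alpha(d) = \gamma$, and the residue of $f$ becomes a quadratic $a_2\eta^2 + a_1\eta + a_0$ with discriminant $a_1^2 - 4a_2a_0 = \overline{\Delta\, d^2 \pi^{-2w(f)}}$, where $\Delta = B^2 - 4AC$. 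For $Ku$ to be non-ruled this residue must define a smooth conic, i.e. $a_2 \neq 0$ and $a_1^2 - 4a_2a_0 \neq 0$; the first condition forces $w(f) = v(A) + 2\gamma$ and the second forces $v(\Delta) + 2\gamma = 2w(f)$. Combining them pins the value down to the single choice $\gamma = \tfrac12 v(\Delta) - v(A)$, independently of $\alpha$.

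I would then recover uniqueness of the center. Using $f'(\alpha) = 2A(\alpha + B/2A)$ and $\Delta = A^2(r_1 - r_2)^2$ for the roots $r_1,r_2$ of $f$, and the hypothesis $\ch kv \neq 2$, the inequalities $v(A)+2\gamma \le v_\alpha(f'(\alpha))+\gamma$ and $v(A)+2\gamma \le v_\alpha(f(\alpha))$ reduce to the single condition $v_\alpha(\alpha + B/2A) \geq \gamma = v(r_1-r_2)$. Thus every admissible center lies in one ball of radius $\gamma$ about $-B/2A \in k$, so all admissible minimal pairs define the \emph{same} valuation $w$ on $k(X)$; since $Ku|k(X)w$ is quadratic, $w$ has a unique extension $u$ to $K$. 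It remains to dispose of the ramified cases: if $e = 2$ the term $A(X-\alpha)^2$ contributes only linearly in the residue variable, yielding a degree-one residue and hence a ruled $Ku$, while $e \geq 3$ leaves a constant residue. Hence non-ruledness can occur only for $e = 1$, and the above shows there is at most one such extension, with exactly one occurring only when the associated residual conic is anisotropic over $F$.

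The main obstacle is the uniqueness step: a priori the center $\alpha$ ranges over all of $\bar k$, and one must show that the smoothness-plus-anisotropy constraints collapse this freedom to a single ball, together with a careful treatment of the ramified cases $e \geq 2$ and of the possibility that the residual quadratic merely induces a constant-field extension (which keeps $Ku$ ruled). The hypothesis $\ch kv \neq 2$ is used throughout to keep the degree-two extension separable and tame and to legitimize completing the square; without it the Newton-polygon bookkeeping and the discriminant computation degenerate.
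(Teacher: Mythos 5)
First, a remark on provenance: the paper does not actually prove this theorem itself --- it quotes it from Becher--Gupta --- but it reproves and generalizes it via Theorem \ref{Thm non ruled necessary} and Corollary \ref{Coro to main thm} (take $p=2$ and complete the square so that $b=0$). The paper's route goes through minimal pairs, the invariant $e=(vk(\a,X):vk(\a))$, and a case split on whether $p$ divides $e$, with the ramified case killed by a henselization and root-of-unity argument (Theorem \ref{Thm p divides e}). Your route is the degree-two-specific one: reduce non-ruledness to smoothness plus anisotropy of a residual conic, and observe that smoothness alone pins down $\g=\tfrac12 v(\Delta)-v(A)$ and confines $\a$ to the single ball $v(\a+B/2A)\geq\g$ about $-B/2A\in k$. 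That computation is correct, and it is an efficient way to get uniqueness of the candidate valuation on $k(X)$ (and then on $K$, since non-ruledness forces $[Ku:k(X)w]=2$ and hence a unique $u$ above $w$).

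However, two steps are genuinely incomplete. (1) When $\a\notin k$, your residue variable $\eta=\overline{(X-\a)/d}$ and the coefficients $a_2,a_1,a_0$ live in $k(\a,X)w$ and $k(\a)v$, not in $k(X)w=F(\eta')$ and $F$. Your anisotropy criterion therefore detects (non-)ruledness of $K(\a)u$ over $k(\a)v$, and to deduce the constraints on $(\a,\g)$ from the hypothesis that $Ku|kv$ is not ruled you need the implication \enquote{$K(\a)u|k(\a)v$ ruled $\Rightarrow$ $Ku|kv$ ruled}. This is exactly the paper's Proposition \ref{Prop residue field non ruled min pair}, whose proof (linear disjointness via Lemma \ref{Lemma K rel alg closed in L}, the compositum formula of Lemma \ref{Lemma (K.F)v:Fv=p}, and a degree count in $Z$) takes a full page; it is not automatic that ruledness over the larger constant field descends. (2) Your dismissal of $e=2$ (\enquote{the residue is degree one in the residue variable, hence ruled}) is only valid when the normalizing element for $f$ can be taken in $k(\a)$, i.e.\ when $\tfrac12 w(f)\in vk(\a)$. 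If $\tfrac12 w(f)\in wk(X)\setminus vk(\a)$, the normalizer must involve $X-\a$ and the residue becomes a genuine quadratic of the shape $\delta_1\eta^2+\delta_2\eta$; the extension is still ruled, but only because this conic has the rational point at the origin, not because the residue has degree one. That this case cannot be waved away on degree grounds is shown by the odd-$p$ analogue: Proposition \ref{Prop characterise non-ruled} produces non-ruled Gau{\ss} extensions precisely in the ramified case $e=p$. Both gaps are fillable for $p=2$, but they are where the real work lies.
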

Recall that function fields of smooth projective conics are of the form $K = k(X)(\sqrt{aX^2 +c})$. To the best of our understanding, the techniques employed by the authors in [\ref{Gupta Becher ruled residue conics}] rely on identifying non-ruled valuations as Gau{\ss} valuations of certain rational function subfields, with a more restrictive notion of Gau{\ss} valuation. However, it is unclear to us whether their methods generalize easily to the setup of this paper. Our analysis shows that the presence of the degree one monomial $bX$ in our case complicates matters even further. 

\pars A primary object in our study of the problem is the theory of minimal pairs. Let $(k(X)|k,v)$ be a residue transcendental extension of valued fields. Fix an extension of $v$ to $\overline{k}(X)$ where $\overline{k}$ is an algebraic closure of $k$. Then the set $v(X-\overline{k}) := \{ v(X-d)\mid d\in\overline{k} \}$ admits a maximum [\ref{APZ characterization of residual trans extns}, Proposition 1.1(c)]. Set $\g:= \max v(X-\overline{k})$. Take $\a\in\overline{k}$ such that
\sn (i) $v(X-\a)=\g$,
\n (ii) $[k(\a):k] \leq [k(\b):k]$ whenever $v(X-\b)=\g$.\\
Such a pair $(\a,\g)$ is said to be a \textbf{minimal pair of definition} for $(k(X)|k,v)$. It follows from [\ref{APZ characterization of residual trans extns}, Theorem 2.1] that $\g\in v\overline{k}$. Thus $\g$ is contained in the divisible hull of $vk$ by [\ref{Engler book}, Theorem 3.2.11]. The importance of minimal pairs is illustrated by the fact that the extension $(k(X)|k,v)$ is completely described by the datum $(\a,\g)$ (see [\ref{AP sur une classe}], [\ref{APZ characterization of residual trans extns}], [\ref{Dutta min fields implicit const fields}]). In particular, assume that $\a\in k$. Any polynomial $f(X)\in k[X]$ has a unique expression $f(X) = \sum_{i=0}^{n} c_i (X-\a)^i$. Then $vf = \min\{ vc_i + i\g \}$. If $\a$ can be chosen to be $0$ then we say that $v$ is a \textbf{Gau{\ss} valuation} and $(k(X)|k,v)$ is a Gau{\ss} extension. The fact that $\gamma = \max v(X-\overline{k})$ implies that $vX\leq \gamma$. If $vX = \gamma = v(X-\alpha)$, then $(0,\gamma)$ is also a minimal pair of definition. Thus $vX < \gamma$ whenever $v$ is not a Gauss valuation.

\pars We can now state the central theorem of this paper. By abuse of notation, we say that $v$ is a Gau{\ss} valuation if the subextension $(k(X)|k,v)$ of $(K|k,v)$ is a Gau{\ss} extension.

\begin{Theorem}\label{Thm non ruled necessary}
	Let $K = k(X)(\sqrt[p]{aX^p+bX+c})$ where $p$ is a prime number such that $p\neq \ch k$, $a,b,c \in k$ and $a\neq 0$. Assume that $k$ contains a primitive $p$-th root of unity. Take a valuation $v$ of $k$ such that $p\neq \ch kv$ and a residue transcendental extension of $v$ to $K$. Assume that $Kv|kv$ is not ruled. Then 
	\[ vX = \min  \mathlarger{\mathlarger{\mathlarger{\{}}} \dfrac{1}{p-1} v(\dfrac{b}{a}) , \dfrac{1}{p} v(\dfrac{c}{a}) \mathlarger{\mathlarger{\mathlarger{\}}}}. \]
	If $\dfrac{1}{p-1} v(\dfrac{b}{a}) > \dfrac{1}{p} v(\dfrac{c}{a})$, then $v$ is a Gau{\ss} valuation which is uniquely determined. Otherwise, $v$ is not necessarily a Gau{\ss} valuation.
\end{Theorem}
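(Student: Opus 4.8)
Write $y=\sqrt[p]{aX^p+bX+c}$ and $f(X)=aX^p+bX+c$, so that $K=k(X)(y)$, $y^p=f(X)$, and $vy=\tfrac1p vf$. Fix the residue transcendental extension of $v$ to $K$ under consideration and let $v$ also denote its restriction to $k(X)$. Since $(K|k,v)$ is residue transcendental and $K|k(X)$ is algebraic, the subextension $(k(X)|k,v)$ is itself residue transcendental, hence $k(X)v|kv$ is ruled by Ohm's theorem. The first step is to see that $Kv|kv$ non-ruled forces the residue extension $Kv|k(X)v$ to be of degree exactly $p$: by the fundamental inequality $ef\le[K:k(X)]=p$ (here $e=(vK:vk(X))$, $f=[Kv:k(X)v]$), the alternatives $f=1$ (with $e=p$ or with defect $p$) all give $Kv=k(X)v$, which is ruled. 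Thus $f=p$, $e=1$, and $vf\in p\,vk(X)$; choosing $g\in k(X)$ with $p\,vg=vf$, we get $Kv=k(X)v\bigl(\sqrt[p]{\overline{f/g^p}}\bigr)$. Note that $f=p$ also makes this the unique extension of $v|_{k(X)}$ to $K$, since no further summand can be accommodated in $ef\le p$. The entire problem is thereby recast as: \emph{for which valuations $v$ on $k(X)$ is the degree-$p$ Kummer cover $k(X)v(\sqrt[p]{\overline{f/g^p}})$ non-ruled?}

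\textbf{Reduction to a residue polynomial.} Let $(\alpha,\gamma)$ be a minimal pair of definition for $(k(X)|k,v)$ and $s\in\overline{k}$ with $vs=\gamma$; then $k(X)v=k(\alpha)v(u)$ with $u$ the residue of $(X-\alpha)/s$ transcendental over $k(\alpha)v$. Since $p\neq\ch k$, the Taylor expansion $f(X)=\sum_{i=0}^{p}\frac{f^{(i)}(\alpha)}{i!}(X-\alpha)^i$ is available, and by the minimal-pair formula $vf=\min_i\{v(f^{(i)}(\alpha)/i!)+i\gamma\}$. Writing $I$ for the set of indices attaining this minimum, the residue $\overline{f/g^p}$ equals, up to a unit constant, a polynomial $h(u)=\sum_{i\in I}\bar c_i\,u^i$. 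The crux is a \emph{ruledness criterion}: $k(\alpha)v(u)(\sqrt[p]{h(u)})$ is ruled whenever $h$ is a single monomial (the residue is a $p$-th power or $\sqrt[p]{u}$, both rational) or whenever $h$ is a nonzero scalar times a $p$-th power, and also in the ``linear tie'' case where $I=\{0,1\}$ (substituting $u\mapsto\bar b u+\bar c$ rationalizes the root). Consequently non-ruledness forces $h$ to be a genuine degree-$\ge2$ non-$p$-power; in particular the top index $i=p$ must lie in $I$, since otherwise $\deg h<p$ collapses to one of the ruled cases.

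\textbf{The value equation.} Because $f^{(p)}/p!=a$, the top term is $a(X-\alpha)^p$ of value $va+p\gamma$. Its presence in the minimum means this value ties, on the lower envelope, with a strictly lower-degree term. Reading the three original monomials $aX^p,bX,c$ as the lines $t\mapsto va+pt,\ vb+t,\ vc$ in the variable $t=vX$, the top (slope-$p$) line lies on the lower envelope only for $t\le\min\{\gamma_1,\gamma_2\}$, where $\gamma_1=\tfrac1{p-1}v(b/a)$ and $\gamma_2=\tfrac1p v(c/a)$ are its crossing points with the $bX$- and $c$-lines; it actually \emph{ties} there precisely at $t=\min\{\gamma_1,\gamma_2\}$. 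Matching this with the requirement above (and tracking that $vX$ equals $v\alpha$ in the non-Gauss case and $\gamma$ in the Gauss case) yields the asserted identity $vX=\min\{\gamma_1,\gamma_2\}=\min\{\tfrac1{p-1}v(b/a),\tfrac1p v(c/a)\}$.

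\textbf{The dichotomy.} If $\gamma_1>\gamma_2$ then $vX=\gamma_2$ and the only admissible tie of the top term is with the constant term, with the $bX$-line inactive, so $h$ is (essentially) $\bar a u^p+\bar c'$. Here a non-Gauss pair is impossible: if $\alpha\neq0$ with $v\alpha=vX=\gamma_2<\gamma$, the binomial cross-terms $\binom{p}{i}a\alpha^{p-i}(X-\alpha)^i$ (with $v\binom{p}{i}=0$ as $p\neq\ch kv$) have value $va+(p-i)\gamma_2+i\gamma<va+p\gamma$, pushing the top term off the envelope and contradicting $p\in I$. Hence $\gamma=vX=\gamma_2$, i.e.\ $v$ is Gauss; this determines $v|_{k(X)}$ uniquely, and by the last remark of the first paragraph its extension to $K$ is unique as well. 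If instead $\gamma_1\le\gamma_2$, then $vX=\gamma_1$ and the tie is with the $bX$-line; now $f'(\alpha)=pa\alpha^{p-1}+b$ has $v(pa\alpha^{p-1})=va+(p-1)\gamma_1=vb$, so cancellation in $f'(\alpha)$ is possible for suitable $\alpha\neq0$, and one exhibits such an $\alpha$ producing a non-Gauss minimal pair with non-ruled residue cover, establishing that $v$ need not be Gauss. The main obstacle is the ruledness criterion of the second paragraph: classifying exactly which residue polynomials $h(u)$ yield ruled versus non-ruled $p$-cyclic covers of $k(\alpha)v(u)$, and in particular separating the benign linear tie $I=\{0,1\}$ from the $X^p$-involving ties, which is the genuinely delicate, Kummer- and genus-theoretic heart of the argument.
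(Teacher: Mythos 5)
The central inference in your second and third paragraphs is false, and it fails precisely in the situation the theorem is designed to capture. You claim that non-ruledness of $Kv|kv$ forces the top index $i=p$ of the Taylor expansion of $f$ about $\alpha$ into the value-minimum set $I$, ``since otherwise $\deg h<p$ collapses to one of the ruled cases.'' But your list of ruled cases (single monomial, $p$-th power, linear tie $I=\{0,1\}$) does not exhaust polynomials of degree $<p$: the paper's Lemma \ref{Lemma non ruled char 0} shows that $kv(Z)\bigl(\sqrt[p]{\beta_{n}Z^{n}+\beta_{n+1}Z^{n+1}}\bigr)$ with $1\le n\le p-2$ is \emph{non-ruled} (it has genus $\tfrac{p-1}{2}$), and Example \ref{Eg not ruled not Gauss} realizes exactly such a residue cover, with residue polynomial $nZ^2-pZ$ of degree $2<p$, from a non-Gau{\ss} valuation satisfying all hypotheses of the theorem. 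In that situation the term $a(X-\alpha)^p$ lies \emph{strictly above} the minimum: Theorem \ref{Thm e and p coprime} proves that in the non-Gau{\ss} case one always has $v(aX^p+bX+c)>v(aX^p)$, the minimum of the $Y$-expansion being attained among the intermediate binomial terms $a\binom{p}{i}\alpha^{i}Y^{p-i}$. So your Newton-envelope derivation of $vX=\min\{\gamma_1,\gamma_2\}$ rests on a premise that is wrong in exactly the non-Gau{\ss} branch; the identity is still true, but the paper obtains it there by a different mechanism (the inequality (\ref{eqn 2}), $\min_i v(a^\prime_i Y^{p-i})\le\min\{v(b^\prime Y),vc^\prime\}$, combined with triangle-inequality contradictions forcing the tie $v(ap\alpha^{p-1})=vb$ inside $b^\prime=ap\alpha^{p-1}+b$). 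You yourself flag the ruledness classification as the unproven ``heart'' of the argument; as stated it is not merely unproven but false, and with it the whole value equation collapses.

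A second, independent gap: your identification $k(X)v=k(\alpha)v(u)$ with $u=\overline{(X-\alpha)/s}$ tacitly assumes $\gamma\in vk(\alpha)$. In general $\gamma$ has some order $e\ge1$ modulo $vk(\alpha)$ and the residue field is generated over $k(\alpha)v$ by $\overline{d(X-\alpha)^e}$; to express the residue of the Kummer generator as a polynomial in this variable the paper must pass from $f$ to $f^e$ (legitimate only when $\gcd(e,p)=1$) and carry out a multinomial analysis, while the case $p\mid e$ requires an entirely separate argument (Section \ref{Sect p divides e}: adjoining $\sqrt[p]{c^\prime}$ or $\sqrt[p]{a}$ to $k$ and using linear disjointness of residue fields to reach a contradiction). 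There is also the reduction from $\alpha\in\overline{k}$ to $\alpha\in k$ (Proposition \ref{Prop residue field non ruled min pair}), which your sketch bypasses. None of these cases is addressed in the proposal, so even repairing the ruledness criterion would leave the argument incomplete.
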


\pars The following corollary is now immediate, which generalizes the findings of [\ref{Gupta Becher ruled residue conics}]:

\begin{Corollary}\label{Coro to main thm}
	Let $K = k(X)(\sqrt[p]{aX^p+c})$ where $p$ is a prime number such that $p\neq\ch k$ and $a\neq 0$. Assume that $k$ contains a primitive $p$-th root of unity. Take a valuation $v$ of $k$ such that $p\neq\ch kv$. Then there exists at most one residue transcendental extension of $v$ to $K$ such that $Kv|kv$ is not ruled. Moreover, if such an extension exists then $v$ is a Gau{\ss} valuation with $vX = \dfrac{1}{p} v(\dfrac{c}{a})$.
\end{Corollary}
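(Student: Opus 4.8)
The plan is to translate the non-ruledness of $Kv|kv$ into a statement about a cyclic cover of the line over the residue field, and then read off $vX$ from a Newton--polygon analysis. Write $f:=aX^p+bX+c$ and $y:=\sqrt[p]{f}$, so that $K|k(X)$ is Kummer of prime degree $p$; since $p\neq\ch kv$ and $\zeta_p\in kv$ this extension is tame and defectless. The fundamental inequality then leaves exactly two possibilities: either $(vK:vk(X))=p$, in which case $Kv=k(X)v$ is ruled by Ohm's theorem [\ref{Ohm}] applied to the rational field $k(X)$, or $[Kv:k(X)v]=p$. Non-ruledness forces the second case, which is equivalent to $vf\in p\,vk(X)$ together with $Kv=k(X)v(\sqrt[p]{F})$, where $F:=\overline{f/z^{p}}$ for any $z\in k(X)$ with $pvz=vf$. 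Fixing a minimal pair $(\alpha,\gamma)$ for $(k(X)|k,v)$ and setting $\theta:=\overline{(X-\alpha)/\pi}$ with $v\pi=\gamma$, the base $k(X)v=k'(\theta)$ is rational over a finite extension $k'|kv$, and expanding $f=\sum_i d_i(X-\alpha)^i$ shows that $F=\sum_{i\in S}\bar e_i\,\theta^{i}$ is a \emph{polynomial} in $\theta$ supported on the Newton-active set $S:=\{i:\ vd_i+i\gamma=vf\}$.

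First I would record the genus criterion. For $p\ge 3$ the cyclic cover $Kv=k'(\theta)(\sqrt[p]{F})$ of $\mathbb{P}^1$ can fail to be ruled only if its geometric genus is positive, and by Riemann--Hurwitz this happens precisely when the reduced form of $F$ has at least three branch points; a constant, a single monomial, or a linear $F$ always gives a rational, hence ruled, residue field. Thus non-ruledness forces $|S|\ge 2$ with the active indices separated so as to produce degree at least two. The second step pins down $vX$: putting $s:=vX$ and comparing the three values $\ell_a(s)=va+ps$, $\ell_b(s)=vb+s$, $\ell_c(s)=vc$ attached to $aX^{p},bX,c$, whose pairwise crossings are at $A:=\tfrac1{p-1}v(b/a)$, $B:=\tfrac1p v(c/a)$ and $C:=v(c/b)$, one sees that off the crossings a single monomial dominates ($F$ a monomial, ruled), and at $s=C$ only $bX,c$ survive ($F$ linear, ruled). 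Hence the surviving tie must involve $aX^{p}$, i.e. $s\in\{A,B\}$; checking which line is actually least at $s=A$ and at $s=B$ shows that the $a$-tie lies on the lower envelope exactly for $s=\min\{A,B\}$, the off-envelope choice collapsing $F$ to a monomial or a constant. This gives $vX=\min\{A,B\}$.

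The final and most delicate step is the Gauss dichotomy, and this is where the main obstacle lies. In a non-Gauss situation $\gamma>s$, so for $i\ge1$ the term-values $vd_i+i\gamma$ form a strictly increasing arithmetic progression with common difference $\gamma-s$; consequently at most one positive index can survive, and $F$ can be non-constant only through a tie with $i=0$. When $\min\{A,B\}=B<A$ the constant term has $vd_0\ge va+pB=vc$ while the lowest positive index is $i=1$, so $F$ is at most linear and $Kv$ is ruled; non-ruledness therefore forces $\gamma=B$, a Gauss valuation, uniquely determined by $v|_k$ and the value $B$. By contrast, when $\min\{A,B\}=A\le B$ the critical points of $f$, solving $paX^{p-1}=-b$, have value exactly $A$: centering $\alpha$ at such a point makes $d_1=f'(\alpha)$ attain high value, removes the $i=1$ term from the progression, and allows $i=2$ to survive and tie with $i=0$, yielding $\deg_\theta F=2$ and a non-ruled residue with $\gamma>A$. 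Thus the Gauss conclusion genuinely fails for $\min=A$ precisely because the ramification of $f$ is concentrated at the value $A$ and not at $B$; the quantitative control of the cancellations in $d_0$ and $d_1$ as $\alpha$ ranges over centers of value $A$ is the heart of the argument. For the Corollary one has $b=0$, whence $A=\infty>B$ and we are always in the Gauss case with $vX=\tfrac1p v(c/a)$; the ``at most one'' statement then follows from the split/inert dichotomy for the prime-degree tame Kummer extension $K|k(X)$, since any further extension of the unique Gauss valuation splits, has residue degree one, and hence a ruled residue field.
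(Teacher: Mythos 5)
Your computational core --- determining which terms of the $(X-\alpha)$-expansion of $aX^p+bX+c$ attain the minimal value and reading off the residue of $f/z^p$ --- is the same analysis the paper carries out in Theorems \ref{Thm e and p coprime} and \ref{Thm p divides e}, and your endgame agrees with how the paper deduces the Corollary from Theorem \ref{Thm non ruled necessary}: with $b=0$ the surviving tie must involve $aX^p$ and $c$, forcing the Gau{\ss} valuation with $vX=\frac{1}{p}v(\frac{c}{a})$, and any valuation on $k(X)$ with more than one extension to $K$ has $Kv=k(X)v$, hence ruled, which gives the ``at most one''. The genuine gap is in the step ``$F$ is a constant, a single monomial, or linear, hence $Kv|kv$ is ruled'', in exactly the two situations the paper works hardest on. First, when $\alpha\notin k$: your coefficients $d_i$ lie in $k(\alpha)$, your generator $\theta$ generates $k(\alpha,X)v$ over $k(\alpha)v$ rather than $k(X)v$ over its constant field, and a normalizer $z$ with $pvz=vf$ exists in $k(X)$ but not necessarily in $k(\alpha)$. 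What your argument then yields is ruledness of $K(\alpha)v|k(\alpha)v$, and ruledness does \emph{not} formally descend to the subextension $Kv|kv$ (a non-rational conic embeds into a rational function field over a quadratic extension of the constants). The paper's Lemma \ref{Lemma (K.F)v:Fv=p}, Proposition \ref{Prop residue field non ruled min pair} and the adjunction of $\sqrt[p]{c^\prime}$ in Theorem \ref{Thm p divides e} exist precisely to control this transfer, always by exhibiting explicit generators of $Kv$ itself via [\ref{Dutta min fields implicit const fields}, Lemma 6.1]. Second, when $p$ divides $e$, the membership $vf\in p\,vk(X)$ does not give $vf\in p\,vk(\alpha)$, so ``$F$ is a constant of $k(\alpha)v$'' cannot be read off the Newton polygon at all; this is why Section \ref{Sect p divides e} needs a separate argument. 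Both situations can occur for an arbitrary residue transcendental extension of $v$ to $K$, so they must be excluded by an argument, not assumed away, before you may conclude that the valuation is Gau{\ss}.

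Two smaller inaccuracies. Your genus criterion is false as stated even for odd $p$: genus zero does not imply ruled (conics without rational points), and the paper never argues via genus in the necessity direction --- it uses relative algebraic closure (Lemma \ref{Lemma K rel alg closed in L}, Proposition \ref{Prop non ruled necessary condns}) and explicit generation. For the Corollary you only need the easy direction (monomial, constant or linear $F$ gives an explicitly ruled residue field), which can be done by hand, but the ``precisely when there are at least three branch points'' claim should not be relied on. Also, the values $vd_i+i\gamma$ for $i\geq 1$ need not form an arithmetic progression when $\operatorname{char} kv$ divides some $\binom{p}{i}$; the correct statement, which suffices here since $b=0$, is that the minimum over $i\geq 1$ is attained only at $i=1$, because $vd_i+i\gamma\geq va+(p-i)v\alpha+i\gamma$ and $\gamma>v\alpha$ in the non-Gau{\ss} case.
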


\pars In the example of the field $\QQ(\zeta_p)(X)(\sqrt[p]{qX^p+1})$, where $\zeta_p$ is a primitive $p$-th root of unity and $q$ is a prime distinct from $p$, the valuation $v$ corresponding to the $q$-adic valuation of $\QQ$ and its Gau{\ss} extension with $vX = \dfrac{1}{p} v(\dfrac{1}{q}) = - \dfrac{1}{p}$ on $\QQ(X)$ has a ruled residue field over $\FF_q$, according to the following Proposition \ref{Prop characterise non-ruled}, thereby the only potentially residue transcendental non-ruled valuation extension identified in Corollary \ref{Coro to main thm}, may in fact be ruled.

% \pars When $\dfrac{1}{p-1} v(\dfrac{b}{a}) > \dfrac{1}{p} v(\dfrac{c}{a})$ and $p$ is an odd prime, then we complement Theorem \ref{Thm non ruled necessary} by characterising all those valuations which admit a non-ruled residue field extension. 

\begin{Proposition}\label{Prop characterise non-ruled}
	Let $(k,v)$ and $K$ be as in Theorem \ref{Thm non ruled necessary}. Further, assume that $p$ is an odd prime and $\dfrac{1}{p-1} v(\dfrac{b}{a}) > \dfrac{1}{p} v(\dfrac{c}{a})$. Take the Gau{\ss} extension of $v$ to $K$ with $vX = \dfrac{1}{p} v(\dfrac{c}{a})$. 
	\newline Then $Kv|kv$ is non-ruled if and only if the following conditions hold true:
	\begin{align*}
		& (i)\,\,  vk+\ZZ\dfrac{1}{p} v(\dfrac{c}{a}) = vk+\ZZ\dfrac{1}{p}vc,\\
		& (ii)\,\,  \dfrac{1}{p} v(\dfrac{c}{a}) - \dfrac{1}{p} vc \notin vk \text{ whenever }\dfrac{1}{p}v(\dfrac{c}{a}) \notin vk.
	\end{align*}
\end{Proposition}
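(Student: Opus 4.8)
The plan is to pass immediately to the residue field of the fixed Gau{\ss} extension and to recast the ruledness question as the rationality of an explicit degree-$p$ Kummer cover of a rational function field. First I would record the relevant values. By Theorem \ref{Thm non ruled necessary} the standing inequality $\frac{1}{p-1}v(\frac ba)>\frac1p v(\frac ca)$ forces $v$ to be the Gau{\ss} valuation with $vX=\g:=\frac1p v(\frac ca)$, so the minimal-pair formula $vf=\min\{vc_i+i\g\}$ gives $v(aX^p)=vc$ and $v(bX)=vb+\g>vc$; thus $\theta:=aX^p+bX+c$ has $v\theta=vc$, the monomial $bX$ never influences residues, and $vy=\frac1p vc=:\d$, where $y^p=\theta$. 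I would then note that $\g$ and $\d$ both have order dividing $p$ in $vK/vk$ (since $p\g,p\d\in vk$) and that $\d-\g=\frac1p va$, which is the quantity appearing in condition (ii).

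Next I would describe the residue field. The subextension has $k(X)v=kv(t)$ rational, with $t$ the residue of $X/\pi$ (where $v\pi=\g$) when $\g\in vk$, and the residue of $X^p/d$ (where $vd=p\g$) when $\g\notin vk$. Since $K=k(X)(y)$ is a degree-$p$ Kummer extension and $p\neq\ch kv$, it is tame, hence defectless, so $[Kv:kv(t)]\cdot[vK:vk(X)]=p$. As $p$ is prime this yields a clean dichotomy: if $\d\notin vk+\ZZ\g$ then the value group grows, $[Kv:kv(t)]=1$, so $Kv=kv(t)$ is rational and $Kv|kv$ is ruled; if $\d\in vk+\ZZ\g$ then $Kv=kv(t)(\eta)$ with $\eta$ a unit-valued normalization of $y$ and $\eta^p\in kv(t)$, and this is the only regime that can be non-ruled.

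The heart of the argument is the explicit Kummer relation and its genus. Writing $\bar\d=m\bar\g$ in $vK/vk$ and reducing $\eta^p=\theta X^{-mp}\mu^{-p}$, only the $aX^p$ and $c$ contributions survive, with equal value, while $bX$ drops out, producing $\eta^p=\lambda t^p+\lambda_0$ when $\g\in vk$ (so $m=0$ and $t$ comes from $X/\pi$), and $\eta^p=t^{-m}(At+C)$ when $\g\notin vk$, all constants nonzero. I would compute the geometric genus by Riemann--Hurwitz over an algebraic closure of $kv$: the first relation is totally ramified at the $p$ distinct zeros of $\lambda t^p+\lambda_0$, giving genus $\frac{(p-1)(p-2)}2$; the second is totally ramified exactly at $t=0,\ t=-C/A,\ t=\infty$, with ramification orders $\{-m,1,m-1\}$ mod $p$, and all three are genuine branch points precisely when $2\le m\le p-1$, giving genus $\frac{p-1}2$. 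Both are positive for odd $p$, hence non-ruled. In the complementary subcases the relation degenerates: $\d\in vk$ with $\g\notin vk$ (that is $m=0$) makes $\eta^p$ linear in $t$, and $m=1$ makes $\eta^p=t^{-1}(At+C)$, and in each case the field is visibly $kv(\eta)$, hence ruled.

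Finally I would match the trichotomy to the stated conditions. Condition (i), $vk+\ZZ\g=vk+\ZZ\d$, is exactly $\langle\bar\g\rangle=\langle\bar\d\rangle$, which simultaneously excludes the value-group regime $\d\notin vk+\ZZ\g$ and the degenerate case $m=0$ with $\g\notin vk$; condition (ii), $\frac1p va\notin vk$ whenever $\g\notin vk$, is exactly $\bar\d\neq\bar\g$ when $\bar\g\neq0$, which removes the last ruled case $m=1$. When both hold we land in one of the two positive-genus configurations, and conversely the failure of either lands us in an explicitly rational one. I expect the main obstacle to be the $\g\notin vk$ part of the genus step: one must choose the normalizing exponent $m$ and element $\mu$ correctly, verify that exactly the $aX^p$ and $c$ terms of $\eta^p$ survive with equal value so that $t$ appears precisely to the powers $t^{1-m}$ and $t^{-m}$, and then read off from the three orders $\{-m,1,m-1\}$ modulo $p$ when all three points genuinely ramify — this is exactly where the exclusion $m=1$, i.e. the content of (ii), is forced.
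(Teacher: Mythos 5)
Your proposal is correct and follows essentially the same route as the paper: normalize the radicand so that only the $aX^p$ and $c$ terms survive in the residue, reduce to the explicit Kummer equations $\eta^p=\lambda t^p+\lambda_0$ (when $\frac1p v(\frac ca)\in vk$) and $\eta^p=t^{-m}(At+C)$ with the exponent $m$ governed by conditions (i) and (ii) (when it is not), identify the degenerate rational cases $m=0,1$ and the value-group-growth case as exactly the failures of (i) or (ii), and conclude non-ruledness in the remaining cases from positive genus. The only cosmetic difference is that you compute the genera $\frac{(p-1)(p-2)}2$ and $\frac{p-1}2$ directly by Riemann--Hurwitz, where the paper cites Stichtenoth (Proposition 6.3.1, Example 6.3.4) and packages the second case as Lemma \ref{Lemma non ruled char 0}.
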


\pars  The problem remains open whether there is a bound on the number of extensions of $v$ to $K$ admitting non-ruled residue field extensions when $\dfrac{1}{p-1} v(\dfrac{b}{a}) \leq \dfrac{1}{p} v(\dfrac{c}{a})$. An initial result in this direction is furnished underneath:

\begin{Proposition}\label{Prop non-ruled non-Gauss}
	Let $(k,v)$ and $K$ be as in Theorem \ref{Thm non ruled necessary}. Assume that $\dfrac{1}{p-1} v(\dfrac{b}{a}) \leq \dfrac{1}{p} v(\dfrac{c}{a})$, $k$ is separable-algebraically closed and $\ch k = \ch kv$. Assume further that $\ch k$ does not divide $p-1$. Then there exist at most two distinct extensions of $v$ to $K$ such that the residue field extension is non-ruled. Moreover, if there are two such extensions, then one of them is necessarily the Gau{\ss} extension with $vX = \dfrac{1}{p-1} v(\dfrac{b}{a})$.
\end{Proposition}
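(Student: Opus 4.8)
The plan is to use Theorem \ref{Thm non ruled necessary} to pin down $vX$, translate non-ruledness into a statement about how the roots of $aX^p+bX+c$ cluster under $v$, and then exploit the rigidity of a trinomial to show that at most two clusters can be relevant. By Theorem \ref{Thm non ruled necessary} together with the hypothesis $\frac{1}{p-1}v(\frac ba)\le\frac1p v(\frac ca)$, every extension $w$ of $v$ to $K$ with $Kv|kv$ non-ruled satisfies
\[ w(X)=\beta:=\tfrac1{p-1}v(\tfrac ba). \]
As $k$ is separably closed and $p\neq\ch k$, the coefficient $a$ is a $p$-th power in $k$; absorbing $a^{1/p}$ we may take $f:=X^p+\frac ba X+\frac ca$ monic with $K=k(X)(\sqrt[p]f)$. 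Writing $B=\frac ba$ and $C=\frac ca$, the separability of $f$ (away from the degenerate case $\mathrm{disc}(f)=0$) and of $f'=pX^{p-1}+B$, guaranteed by $p\neq\ch k$ and $\ch k\nmid p-1$, forces all roots of $f$ into $k$; thus $f=\prod_{j=1}^{p}(X-\theta_j)$ with $\theta_j\in k$ and $v\theta_j=\beta$. The restriction of $w$ to $k(X)$ is residue transcendental with $w(X)=\beta$, hence is given by a minimal pair $(\alpha,\gamma)$ with $\gamma\ge\beta$, and $k(X)v=kv(T)$ is rational over the separably closed field $kv$.

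I would then identify the residue field. Since $p\neq\ch kv$, the Kummer extension $K|k(X)$ is tame, so for a fixed $w|_{k(X)}$ a non-ruled residue can occur only for the unique extension of residue degree $p$, and then $Kv=kv(T)(\sqrt[p]{\overline{f_w}})$, where $\overline{f_w}\in kv(T)$ is the reduction of $f$ after the substitution prescribed by $(\alpha,\gamma)$. Its class modulo $p$-th powers is dictated by the position of the $\theta_j$ relative to the disc of $(\alpha,\gamma)$: each root on the boundary contributes a simple linear factor, roots sharing a residue contribute a repeated factor, and roots outside affect only the point at infinity. Riemann--Hurwitz in the tame case gives $2g-2=-2p+r(p-1)$, i.e. $g=\tfrac12(r-2)(p-1)$, for the cover $Z^p=\overline{f_w}(T)$, where $r$ is its number of branch points; as $kv$ is separably closed, $Kv|kv$ is ruled precisely when this cover has genus zero, that is, when $r\le2$. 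Consequently the non-ruled valuations correspond exactly to the nodes of the cluster tree of $\{\theta_1,\dots,\theta_p\}$ lying in the Gau{\ss} disc at which at least three branches are visible (the point at infinity counting as one branch for every proper sub-disc).

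The crux is to show this tree is nearly trivial, and here the trinomial shape is decisive. Computing
\[ \mathrm{disc}(f)=\pm\big((-1)^{p-1}(p-1)^{p-1}B^{p}+p^{p}C^{\,p-1}\big)=\prod_{i<j}(\theta_i-\theta_j)^2, \]
one sees that in the boundary case $vC=p\beta$ both summands have value $p(p-1)\beta$, so $v(\mathrm{disc}(f))=p(p-1)\beta$ --- meaning all $\theta_i-\theta_j$ have value $\beta$ and the tree is flat --- unless the two residues cancel. In that exceptional case $\overline f=Z^p+\overline B Z+\overline C$ has a repeated root, which must be a common zero of $\overline f$ and $\overline{f'}=pZ^{p-1}+\overline B$; but each zero $\eta$ of $\overline{f'}$ satisfies $\overline f(\eta)=\frac{p-1}{p}\overline B\,\eta+\overline C$, so there is at most one such common zero, namely $\eta^{\ast}=-\frac{p\overline C}{(p-1)\overline B}$, and since $\overline{f''}(\eta^{\ast})\neq0$ it is exactly double. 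Hence in every case the cluster tree consists of the top node (the Gau{\ss} disc) together with at most one proper internal node $C^{\ast}$, a cluster of size two.

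It remains to count. The top node is the Gau{\ss} valuation with $vX=\beta$; it carries at least three branches, hence is non-ruled, exactly when $\overline f$ has at least three distinct roots. The node $C^{\ast}$, present only in the non-flat case, has two children and, being a proper sub-disc, also ramifies at infinity, so $r=3$ and it is always non-ruled for $p\ge3$; every remaining node is a leaf and gives a ruled residue. Thus there are at most two non-ruled extensions, and when there are two, one is the Gau{\ss} extension with $vX=\frac1{p-1}v(\frac ba)$, as asserted. The genuine obstacle is the middle step: proving that the reduction of the trinomial can acquire at most a single, necessarily double, repeated root, so that no third branching cluster arises --- this is precisely where the trinomial form, the primality of $p$, and $\ch k\nmid p-1$ are all used. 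Two bookkeeping points would also need attention: the case $\beta\notin vk$, treated by passing to the least $e>0$ with $e\beta\in vk$ and carrying the factor $e$ through the genus computation, and the degenerate case $\mathrm{disc}(f)=0$, where $f$ is inseparable over $k$.
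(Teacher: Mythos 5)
Your route is genuinely different from the paper's. The paper never factors the trinomial: it works entirely with the minimal pair $(\alpha,\gamma)$ and the Taylor coefficients $a_i',b',c'$ of $aX^p+bX+c$ at $\alpha$. Lemma \ref{Lemma non-ruled non-Gauss} shows that for a non-Gau{\ss}, non-ruled extension one must have $\frac{a\alpha^{p-1}}{b}v=-\frac1p v$ and $\frac{c}{b\alpha}v=\frac{1-p}{p}v$, and that $\gamma$ is then an explicit minimum determined by $\alpha$; Corollary \ref{Coro non-ruled non-Gauss} gives at most one admissible $\gamma$ per $\alpha$, and two distinct non-Gau{\ss} candidates are excluded by showing $v(\alpha-\alpha_1)<\gamma$ forces $vb_1'<vb'$, $vc_1'<vc'$ and hence $\gamma_1<\gamma$, contradicting $\gamma\leq\gamma_1$. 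The hypothesis $\ch k\nmid p-1$ enters there only to guarantee $\frac{1-p}{p}v\neq 0$, hence $\frac{\alpha}{\alpha_1}v=1v$. Your approach --- factor $f$ over the separably closed base, read off the cluster picture of the roots, and decide ruledness of each candidate residue field by Riemann--Hurwitz --- is more geometric and, if completed, would actually identify \emph{which} extensions are non-ruled rather than merely bounding their number. Your central computation (the discriminant of the trinomial, and the fact that $\overline{f}$ and $\overline{f'}$ share at most one common zero, necessarily double) is correct and is a legitimate substitute for the paper's coefficient estimates.

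There are, however, real gaps. First, the assertion that all roots satisfy $v\theta_j=\beta$ is false in the strict case $\frac{1}{p-1}v(\frac ba)<\frac1p v(\frac ca)$: the Newton polygon of $X^p+BX+C$ then has a vertex at $(1,vB)$, so $p-1$ roots have value $\beta$ and one root has value $vC-vB>\beta$. This happens to be harmless --- in that case $v(C^{p-1})>v(B^p)$, no cancellation can occur in the discriminant, and the same computation shows all pairwise differences have value exactly $\beta$ --- but your write-up only treats the boundary case $vC=p\beta$ and the strict case must be addressed. Second, and more seriously, the pivotal equivalence ``non-ruled $\Leftrightarrow$ node of the cluster tree with at least three visible branches'' is asserted rather than proved. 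A priori the candidate valuations form a continuum of pairs $(\alpha,\gamma)$ with $\gamma\geq\beta$, and to cut this down to finitely many you must compute $\overline{f_w}$ for an \emph{arbitrary} disc (its roots are the residues of $(\theta_j-\alpha)/d$ for the $\theta_j$ in the closed disc, with multiplicities, and the roots outside contribute units), verify that every non-node gives $r\leq 2$ or a reduction that is a $p$-th power (so that the residue degree drops and Ohm's theorem applies), and handle the case where all roots lie in one residue class. This is exactly the step that does the work the paper's Lemma \ref{Lemma non-ruled non-Gauss} and Corollary \ref{Coro non-ruled non-Gauss} do, and it cannot be waved through. Finally, the deferred degenerate case $\mathrm{disc}(f)=0$ does need an argument (though the repeated root of $f$ is then unique, double, and lies in $k$, so the cluster tree is still controlled), whereas the worry about $\beta\notin vk$ is vacuous: $k$ separably closed makes $vk$ divisible, so $e=1$ throughout.
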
 

\pars The converse to Theorem \ref{Thm non ruled necessary} is not necessarily true, as we will see examples of ruled residue field extensions in Section \ref{Sect egs} with $vX = \min  \mathlarger{\mathlarger{\mathlarger{\{}}} \dfrac{1}{p-1} v(\dfrac{b}{a}) , \dfrac{1}{p} v(\dfrac{c}{a}) \mathlarger{\mathlarger{\mathlarger{\}}}}$. For any odd prime $p$, we also provide examples of non-ruled residue field extensions when the valuation is not a Gau{\ss} valuation. 

%===================================================================================

\section{Initial results}\label{Sect initial results}

We state some preliminary results regarding non-ruled extensions of prime degree. First, we need some preparation. 

\begin{Lemma}\label{Lemma K rel alg closed in L}
	Assume that $L|K$ is an extension of fields such that $K$ is relatively algebraically closed in $L$. Take $a\in\overline{L}$ which is algebraic over $K$. Then $K(a)$ and $L$ are linearly disjoint over $K$.
\end{Lemma}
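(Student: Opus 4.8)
The plan is to reduce the assertion to a statement about the irreducibility of a minimal polynomial. Since $a$ is algebraic over $K$, the extension $K(a)|K$ is finite, say of degree $n=[K(a):K]$, with $K(a)=K[a]$ spanned over $K$ by $1,a,\ldots,a^{n-1}$. For a finite extension there is the standard criterion that $K(a)$ and $L$ are linearly disjoint over $K$ if and only if $[K(a).L:L]=[K(a):K]$, equivalently the $K$-basis $1,a,\ldots,a^{n-1}$ of $K(a)$ remains $L$-linearly independent. Thus it suffices to show that the minimal polynomial $f$ of $a$ over $K$ stays irreducible over $L$, so that $[L(a):L]=\deg f=n$. (The degenerate case $a\in K$ is trivial, so I may assume $n\geq 2$.)

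To prove this irreducibility, I would let $g\in L[X]$ be the monic minimal polynomial of $a$ over $L$. Since $a$ is a root of $f\in K[X]\subseteq L[X]$, we have $g\mid f$ in $L[X]$; in particular, in $\overline{L}[X]$ every root of $g$ is a root of $f$. The crucial observation is that every root of $f$ is algebraic over $K$, being a zero of the nonzero polynomial $f$ with coefficients in $K$.

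Now the coefficients of $g$ are, up to sign, the elementary symmetric functions of the roots of $g$ in $\overline{L}$. As each such root is algebraic over $K$, these symmetric functions are algebraic over $K$ as well. On the other hand, the coefficients of $g$ lie in $L$ by construction. Since $K$ is relatively algebraically closed in $L$, I conclude that all coefficients of $g$ lie in $K$, that is $g\in K[X]$. Then $g$ is a monic divisor of $f$ in $K[X]$, and $f$ is irreducible over $K$ as the minimal polynomial of $a$; hence $g=f$. This yields $[L(a):L]=\deg g=\deg f=[K(a):K]$, which is exactly the linear disjointness required.

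The argument is essentially self-contained, so I expect no serious obstacle. The only points requiring care are the reduction of linear disjointness to the degree equality $[K(a).L:L]=[K(a):K]$, and the passage to $\overline{L}$ in order to speak of the roots of $g$ and to recognise them as algebraic over $K$; once these are in place, the hypothesis that $K$ is relatively algebraically closed in $L$ does all the work by forcing $g$ to have coefficients in $K$.
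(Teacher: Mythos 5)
Your proof is correct and follows essentially the same route as the paper: both arguments take the minimal polynomial of $a$ over $L$, observe that its roots are roots of the minimal polynomial over $K$ and hence algebraic over $K$, deduce via symmetric functions and the relative algebraic closedness of $K$ in $L$ that it has coefficients in $K$, and conclude the degree equality $[L(a):L]=[K(a):K]$ that gives linear disjointness. No issues.
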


\begin{proof}
	Take the minimal polynomial $Q(X)$ of $a$ over $L$. Write 
	\[ Q(X) = (X-a)(X-a_2) \dotsc (X-a_n) = \sum_{i=0}^{n} c_i X^i. \]
	Take the minimal polynomial $f(X)$ of $a$ over $K$. Then $Q$ divides $f$ over $L$ and hence each root of $Q$ is also a root of $f$. Thus each $a_i$ is a $K$-conjugate of $a$ and consequently $a_i \in \overline{K}$. Further, observe that each coefficient $c_j$ is a symmetric expression in the roots $a_i$ and hence $c_j \in \overline{K}$ for all $j$. Consequently, $c_j \in \overline{K}\sect L = K$, that is, $Q(X)\in K[X]$. It follows that 
	\[ [K(a):K] = [L(a):L], \]
	that is, $K(a)$ and $L$ are linearly disjoint over $K$. 
\end{proof}

\begin{Proposition}\label{Prop non ruled necessary condns}
	Let $K|k$ be an algebraic function field of dimension one. Take $X\in K$ which is transcendental over $k$ and assume that $K|k(X)$ is a Galois extension of prime degree $p$. Take a valuation $v$ of $K$ and extend it to $\overline{K}$. Assume that $(K|k,v)$ is residue transcendental and $Kv|kv$ is non-ruled. Then the following statements hold true: 
	\sn (i) $[Kv:k(X)v]=p$,
	\n (ii) $\overline{kv}\sect k(X)v = \overline{kv}\sect Kv$,
	\n (iii) $\overline{k}\sect K=k$.
\end{Proposition}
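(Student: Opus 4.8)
The plan is to isolate all the nontriviality in the degree-$p$ step $K|k(X)$ and then exploit the primality of $p$. The starting observation is that since $K|k(X)$ is algebraic, $(k(X)|k,v)$ is again residue transcendental, so by Ohm's Ruled Residue Theorem [\ref{Ohm}] the extension $k(X)v|kv$ is ruled. Being an algebraic function field of dimension one, this means $k(X)v = k_1(s)$ is a rational function field over its field of constants $k_1 := \overline{kv}\cap k(X)v$, with $s$ transcendental over $k_1$. I would similarly write $k_0 := \overline{kv}\cap Kv$ for the field of constants of $Kv$; then $k_1\subseteq k_0$, and the hypothesis that $Kv|kv$ is non-ruled says precisely that $Kv$ is \emph{not} a rational function field over $k_0$.

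For (i), let $e = (vK:vk(X))$, $f = [Kv:k(X)v]$, let $d$ be the defect, and let $g$ be the number of extensions of $v|_{k(X)}$ to $K$. Since $K|k(X)$ is Galois, the fundamental equality reads $g\,e\,f\,d = [K:k(X)] = p$. As $p$ is prime and each of $g,e,f,d$ is a positive integer, exactly one factor equals $p$. If $f\neq p$, then $f = 1$, forcing $Kv = k(X)v$, which is ruled --- contradicting the hypothesis. Hence $f = p$ (and incidentally $e = g = d = 1$), which is (i).

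For (ii) and (iii) the same mechanism is applied, once upstairs and once downstairs. Upstairs: $s\in k(X)v\subseteq Kv$ is transcendental over $k_1$, hence over the algebraic extension $k_0$, so $k_0(s)\subseteq Kv$ is rational over $k_0$; and since $k_1$ is relatively algebraically closed in $k(X)v = k_1(s)$, Lemma \ref{Lemma K rel alg closed in L} gives $[k_0(s):k_1(s)] = [k_0:k_1]$. From the tower $k_1(s)\subseteq k_0(s)\subseteq Kv$ together with (i), $[k_0:k_1]$ divides $p$; if it were $p$ then $Kv = k_0(s)$ would be rational over $k_0$, hence ruled, a contradiction, so $k_0 = k_1$, giving (ii). Downstairs: with $k' := \overline{k}\cap K$ and using that $k$ is relatively algebraically closed in $k(X)$, Lemma \ref{Lemma K rel alg closed in L} yields $[k'(X):k(X)] = [k':k]$ (so $k'|k$ is in particular finite), and this divides $[K:k(X)] = p$; were it $p$ we would get $K = k'(X)$ ruled and hence $Kv|kv$ ruled by [\ref{Ohm}], a contradiction. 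Thus $k' = k$, which is (iii).

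The step I expect to demand the most care is the translation of the ruledness hypotheses into statements about rational function fields over the \emph{exact} fields of constants $k_0,k_1,k'$: this requires knowing that $k(X)v|kv$ and $Kv|kv$ are genuine algebraic function fields of dimension one, so that these relative algebraic closures are finite over the base and behave as fields of constants, and that for a dimension-one function field ruledness is equivalent to being rational over its field of constants. The other delicate point is invoking the Galois fundamental equality $g\,e\,f\,d = p$ with the defect $d$ correctly accounted for; once both are secured, the primality of $p$ makes each case analysis collapse to the single surviving possibility.
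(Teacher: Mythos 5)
Your proposal is correct and follows essentially the same route as the paper: part (i) via the Galois fundamental equality/divisibility together with Ohm's theorem, and parts (ii)--(iii) via the Ruled Residue Theorem combined with Lemma \ref{Lemma K rel alg closed in L} and the degree-$p$ pigeonhole. The only cosmetic difference is that the paper argues with a single element $\theta$ lying outside the smaller constant field rather than with the full relative algebraic closures $k_0$, $k_1$, $k'$ (which sidesteps having to justify applying the linear-disjointness lemma to a whole finite extension at once), but the substance is identical.
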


\begin{proof}
	It is well-known that $K|k(X)$ being a Galois extension implies that $[Kv:k(X)v]$ divides $[K:k(X)]$ [\ref{ZS2}, Ch. VI, \S 12, Corollary to Theorem 25]. If $Kv = k(X)v$, then $Kv|kv$ is ruled by the Ruled Residue Theorem, contradicting our assumption. It follows that $Kv\neq k(X)v$ and hence $[Kv:k(X)v] > 1$. We thus have the first assertion.  
	
	\pars Set $F:= \overline{kv}\sect k(X)v$. Then $F|kv$ is a finite extension and $k(X)v = F(Z)$ where $Z$ is transcendental over $F$ by the Ruled Residue Theorem. Suppose that $\th\in \overline{kv}\sect Kv$ such that $\th\notin F$. By definition, $F$ is relatively algebraically closed in $k(X)v$. It then follows from Lemma \ref{Lemma K rel alg closed in L} that $[k(X)v \, (\th) : k(X)v] = [F(\th):F] > 1$. Since $k(X)v\,  (\th) \subseteq Kv$, we conclude from part (i) that $Kv = k(X)v \, (\th)$. It follows that $Kv = F(\th)(Z)$. Now, $Z$ being transcendental over $F$ remains transcendental over $F(\th)$. Moreover, $F(\th)|kv$ is a finite extension. This contradicts the assumption that $Kv|kv$ is non-ruled. We thus have the assertion (ii).
	
	\pars We now suppose that $\th^\prime\in \overline{k}\sect K$ such that $\th^\prime\notin k$. Similar arguments as above imply that $K = k(\th^\prime)(X)$. Hence $K|k$ is a ruled extension. The Ruled Residue Theorem then implies that $Kv|kv$ is also ruled, thereby yielding a contradiction. We thus have the final assertion. 
\end{proof}

\begin{Proposition}\label{Prop Kv generated by residue}
	Let $K|k$ be an algebraic function field such that $K = k(X)(\sqrt[p]{F(X)})$ where $X$ is transcendental over $k$, $p$ is a prime and $F(X)\in k(X)$. Let $v$ be a valuation on $K$ such that $(K|k,v)$ is residue transcendental. Assume that $vF=0$, $p\neq\ch kv$ and $[Kv: k(X)v] = p$. Then $Kv = k(X)v \, (\sqrt[p]{F(X)v})$. 
\end{Proposition}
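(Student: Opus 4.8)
The plan is to exhibit $Kv$ as generated by the residue of a chosen $p$-th root of $F(X)$, and then to use the primality of $p=[Kv:k(X)v]$ to finish. Write $y:=\sqrt[p]{F(X)}$, so that $y^p=F(X)$ and $K=k(X)(y)$ with minimal polynomial $T^p-F(X)$ over $k(X)$. Since $vF=0$ and the value group is torsion-free, $p\cdot vy = vF = 0$ forces $vy=0$; thus $y$ is a unit, its residue $\bar y:=yv$ is a nonzero element of $Kv$, and reducing $y^p=F(X)$ gives $\bar y^{\,p}=F(X)v$. Hence $\bar y$ is a $p$-th root of $F(X)v$ and $k(X)v(\sqrt[p]{F(X)v}) = k(X)v(\bar y) \subseteq Kv$.

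First I would record that, because $[Kv:k(X)v]=p$ is prime and $k(X)v \subseteq k(X)v(\bar y)\subseteq Kv$, multiplicativity of degrees forces $[k(X)v(\bar y):k(X)v]$ to be either $1$ or $p$; in the latter case $k(X)v(\bar y)=Kv$ and we are done. So the whole content is to rule out $\bar y \in k(X)v$, equivalently to show that the residue extension is actually \emph{generated} by $\bar y$, rather than having $\bar y$ collapse into the base while $Kv|k(X)v$ is produced by some other element.

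The decisive observation is that the reduction $T^p-F(X)v$ of the minimal polynomial of $y$ is \emph{separable} over $k(X)v$: its formal derivative $p\,T^{p-1}$ is nonzero since $p\neq\ch kv$, and its only possible repeated root would be $0$, which is excluded because $F(X)v\neq 0$ (as $vF=0$). Passing to the Henselization of $(k(X),v)$ --- which leaves the residue field $k(X)v$ unchanged --- Hensel's Lemma lifts the factorization of $T^p-F(X)v$ into distinct monic irreducibles over $k(X)v$ to a factorization of $T^p-F(X)$, and the extensions of $v|_{k(X)}$ to $K$ correspond to these factors, the residue field of each being generated over $k(X)v$ by the residue of the corresponding root. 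For our fixed extension this residue is precisely $\bar y$, whence $Kv = k(X)v(\bar y) = k(X)v(\sqrt[p]{F(X)v})$, and the hypothesis $[Kv:k(X)v]=p$ confirms that $\bar y$ is a genuine degree-$p$ generator.

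The main obstacle is exactly this last step: the inclusion $k(X)v(\bar y)\subseteq Kv$ is immediate, but the reverse --- that $\bar y$ generates the whole residue field --- cannot be seen from residues alone, since cancellations among the $y^i$-coefficients of a general element of $K$ obstruct a direct computation. It is the separability of the reduced minimal polynomial, together with Hensel's Lemma over the Henselization, that controls these cancellations and pins down the residue field; the hypothesis $p\neq\ch kv$ is what guarantees this separability, while $[Kv:k(X)v]=p$ then rules out the degenerate case $\bar y\in k(X)v$.
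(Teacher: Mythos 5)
Your proposal is correct and rests on the same key ingredients as the paper's proof: passing to the henselization of $(k(X),v)$, observing that $T^p-F(X)v$ is separable because $p\neq\ch kv$ and $F(X)v\neq 0$, and applying Hensel's Lemma. The only difference is one of direction — the paper argues by contradiction (a $p$-th root of $F(X)v$ in $k(X)v$ would lift to a root of $T^p-F(X)$ in $k(X)^h$, contradicting $[K^h:k(X)^h]=p$ obtained from the Fundamental Inequality), whereas you run the Hensel factorization correspondence forwards to see directly that $Kv$ is generated by the residue of $\sqrt[p]{F(X)}$; this is a cosmetic rather than substantive distinction.
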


\begin{proof}
	Fix an extension of $v$ to $\overline{K}$ and denote the henselization of any subfield $L$ of $K$ by $L^h$. Observe that $K^h = k(X)^h.K = k(X)^h (\sqrt[p]{F(X)})$ [\ref{Kuh vln model}, Theorem 5.15]. Hence $[K^h:k(X)^h] \leq p$. Since passing to the henselization leaves the residue fields unchanged, it follows from the condition $[Kv: k(X)v] = p$ and the Fundamental Inequality that
	\begin{equation}\label{eqn 0}
		[K^h:k(X)^h] = [k(X)^h (\sqrt[p]{F(X)}): k(X)^h] = p.
	\end{equation}
	Suppose that $\sqrt[p]{F(X)v} \in k(X)v$. Then $T^p - F(X)v$ has a root in $k(X)v$. The assumption that $p\neq\ch kv$ implies that $T^p - F(X)v$ is a separable polynomial and hence each root is simple. Then $k(X)^h$ being henselian implies that $T^p - F(X)$ has a simple root in $k(X)^h$, whence $\sqrt[p]{F(X)} \in k(X)^h$, thereby contradicting (\ref{eqn 0}). We thus have the proposition.
\end{proof}

\begin{Lemma}\label{Lemma Fv poly over kv}
	Let notations and assumptions be as in Proposition \ref{Prop Kv generated by residue}. Take a minimal pair of definition $(\a,\g)$ for $(k(X)|k,v)$. Assume that $\a\in k$. Set $e$ to be the order of $\g$ modulo $vk$. Take $d\in k$ such that $vd=-e\g$. Further, assume that $F(X)\in k[X]$. Then $F(X)v \in kv\,[d(X-\a)^e v]$.
\end{Lemma}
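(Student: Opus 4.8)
The plan is to compute the residue $F(X)v$ explicitly by expanding $F$ in powers of $X-\alpha$ and reading off which terms survive the residue map. Since $\alpha\in k$, I may write $F(X)=\sum_{i=0}^{n} c_i(X-\alpha)^i$ with $c_i\in k$. Because $(\alpha,\gamma)$ is a minimal pair and $\alpha\in k$, the value of $F$ is governed by the formula recalled in the introduction, namely $vF=\min_i\{vc_i+i\gamma\}$, and by hypothesis this minimum equals $0$. In particular every term $c_i(X-\alpha)^i$ has value $vc_i+i\gamma\geq 0$, so each lies in the valuation ring of $v$. Since the residue map is a ring homomorphism on the valuation ring, it is additive, and therefore $F(X)v=\sum_i\big(c_i(X-\alpha)^i\big)v$, where a term contributes nontrivially precisely when its value is $0$.

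The key step is to pin down which indices $i$ yield a surviving term. A term $c_i(X-\alpha)^i$ has value $0$ exactly when $vc_i+i\gamma=0$, that is, $vc_i=-i\gamma$. As $vc_i\in vk$, this forces $i\gamma\in vk$; but $e$ was defined to be the order of $\gamma$ modulo $vk$, so $i\gamma\in vk$ holds if and only if $e\mid i$. Hence only the indices that are multiples of $e$ contribute to the residue, and I may restrict attention to $i=ej$.

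It then remains to rewrite each surviving term as a $kv$-multiple of a power of the residue of $d(X-\alpha)^e$. For $i=ej$ I would factor $c_{ej}(X-\alpha)^{ej}=\big(c_{ej}d^{-j}\big)\big(d(X-\alpha)^e\big)^j$. By the choice $vd=-e\gamma$ we have $v(c_{ej}d^{-j})=vc_{ej}-j\,vd=-ej\gamma+ej\gamma=0$, so its residue $(c_{ej}d^{-j})v$ lies in $kv$. Writing $t:=d(X-\alpha)^e v$, I obtain $F(X)v=\sum_j (c_{ej}d^{-j})v\cdot t^{\,j}\in kv[t]$, which is the desired conclusion.

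I do not anticipate a serious obstacle here: the whole argument is essentially the definition of $e$ combined with the minimal-pair valuation formula. The only point requiring care is the additivity of the residue computation, i.e.\ ensuring that $vF=\min_i\{vc_i+i\gamma\}$ (and not merely $\geq$), so that the individual terms genuinely lie in the valuation ring; this is exactly the minimal-pair property guaranteed by $\alpha\in k$. (The element $t$ is in fact transcendental over $kv$ by the theory of minimal pairs, but this is not needed for the present statement.)
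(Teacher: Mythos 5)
Your argument is correct and follows essentially the same route as the paper's proof: expand $F$ in powers of $X-\alpha$, use the minimal-pair valuation formula $vF=\min_i\{vc_i+i\gamma\}=0$ to see that only indices divisible by $e$ contribute a nonzero residue, and then absorb the appropriate power of $d$ to express each surviving term as an element of $kv$ times a power of $d(X-\alpha)^e v$. No gaps.
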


\begin{proof}
	Without any loss of generality we assume that $\a=0$. Write $F(X)= \sum_{i=0}^{n} c_i X^i$ where $c_i\in k$. Then $0=vF = \min\{ vc_i +i\g \}$. Take some $i$ such that $vc_i + i\g = 0$. Then $i\g\in vk$. The minimality of $e$ implies that $e$ divides $i$. In other words, $vc_i+i\g > 0$ whenever $e$ does not divide $i$. Thus,
	\begin{align*}
		F(X)v &= (\sum_{i=0}^{n} c_i X^i)v =  \sum_{i=0}^{n} (c_i X^i)v  = \sum_{i=0}^{n^\prime} (c_{ie}X^{ie})v\\
		&= \sum_{i=0}^{n^\prime} (\dfrac{c_{ie}}{d^i} d^iX^{ie})v = \sum_{i=0}^{n^\prime} (\dfrac{c_{ie}}{d^i}v) (dX^e v)^i \in kv[dX^e v].
	\end{align*}
\end{proof}

\pars \textbf{For the rest of this paper}, we fix a prime $p\neq \ch k$ and assume that $k$ contains a primitive $p$-th root of unity. Set $K:= k(X)(\sqrt[p]{aX^p + bX +c})$ where $a,b,c \in k$ and $a\neq 0$. Observe that $K|k(X)$ is a Galois extension of degree $p$ by [\ref{Lang}, Ch. VI, \S 6, Theorem 6.2(ii)]. Take a valuation $v$ of $K$ such that $(K|k,v)$ is residue transcendental and $p\neq\ch kv$. Fix an extension of $v$ to $\overline{K}$. Take a minimal pair of definition $(\a,\g)\in \overline{k}\times v\overline{k}$ for $(k(X)|k,v)$. Set $e$ to be the order of $\g$ modulo $vk(\a)$. Since $(\a,\g)$ is also a minimal pair of definition for $(k(\a,X)|k(\a),v)$, it follows from [\ref{APZ characterization of residual trans extns}, Theorem 2.1] that $e=(vk(\a,X):vk(\a))$. As a consequence, we obtain that
\begin{equation}\label{eqn *}
	evk(X) \subseteq vk(\a).
\end{equation}

\begin{Lemma}\label{Lemma (K.F)v:Fv=p}
	Assume that $F$ is any algebraic extension of $k$ such that $\a\in F$ and $Fv = k(\a)v$. Moreover, assume that $Kv|kv$ is not ruled. Then 
	\[ [(K.F)v:F(X)v] = [Kv:k(X)v] =p. \]
\end{Lemma}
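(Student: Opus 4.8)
The plan is to read off the residue degree from the tame behaviour of the Kummer extension $K.F=F(X)(\sqrt[p]{\Phi})$, where I write $\Phi:=aX^p+bX+c$. Proposition \ref{Prop non ruled necessary condns}(i) gives $[Kv:k(X)v]=p$, so $K|k(X)$ is unramified, $e(K|k(X))=1$, and hence $v\sqrt[p]{\Phi}=\tfrac1p v\Phi\in vK=vk(X)$; after multiplying $\Phi$ by a suitable $p$-th power from $k(X)$ I may assume $v\Phi=0$ and thus $v\sqrt[p]{\Phi}=0$. Since $\sqrt[p]{\Phi}\notin\overline{k}(X)\supseteq F(X)$ — otherwise $\Phi$ would be a $p$-th power in $\overline{k}(X)$, forcing $b=c=0$ and making $K|k$, hence $Kv|kv$, ruled — the Galois extensions $K$ and $F(X)$ are linearly disjoint over $k(X)$, so $[K.F:F(X)]=p$. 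The fundamental inequality then gives the upper bound $[(K.F)v:F(X)v]\le p$, and it remains to prove the reverse inequality.

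For the lower bound I would use that $K.F|F(X)$ is a cyclic Kummer extension of degree $p$ with $p\neq\ch Fv$, hence tamely ramified and defectless: writing $g$ for the number of extensions of $v|_{F(X)}$ to $K.F$, one has $g\cdot e(K.F|F(X))\cdot f(K.F|F(X))=p$. As $v\sqrt[p]{\Phi}=0\in vF(X)$ creates no new values, $e(K.F|F(X))=1$, and primality leaves only the desired case $(g,f)=(1,p)$ or the totally split case $(g,f)=(p,1)$. Thus the whole statement reduces to excluding the totally split case, equivalently to showing that $\Phi v$ is \emph{not} a $p$-th power in $F(X)v$: for then $\sqrt[p]{\Phi v}=(\sqrt[p]{\Phi})v\in(K.F)v$ has degree $p$ over $F(X)v$, giving $[(K.F)v:F(X)v]\ge p$. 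Note that $\Phi v$ is not a $p$-th power in $k(X)v$, since $Kv=k(X)v(\sqrt[p]{\Phi v})$ by Proposition \ref{Prop Kv generated by residue}, so the task is to propagate this from $k(X)v$ to the larger field $F(X)v$.

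Here the hypotheses $\alpha\in F$ and $Fv=k(\alpha)v$ enter decisively. Because $\alpha\in F$, the structure theorem for minimal pairs presents $F(X)v$ as a rational function field $Fv(\eta)=k(\alpha)v(\eta)$ over $Fv$, and Lemma \ref{Lemma Fv poly over kv}, applied with base field $F$, exhibits $\Phi v$ as a polynomial in $\eta$ over $k(\alpha)v$. If $\Phi v$ were a $p$-th power in $F(X)v$, then, $k(\alpha)v[\eta]$ being a unique factorization domain, it would be the $p$-th power of a polynomial there, so $Kv\subseteq k(\alpha)v(\eta)$ and, by Lüroth's theorem, the compositum $Kv\cdot k(\alpha)v$ would be purely transcendental over $k(\alpha)v$; that is, $Kv$ would become ruled after the \emph{constant} extension $k(\alpha)v=Fv$. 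The main obstacle is to show this cannot happen: one part is automatic, since if the non-ruledness of $Kv$ stems from positive genus of the cover $s^p=\Phi v$, that genus is invariant under the constant extension and forbids rationality over $k(\alpha)v$; the delicate remaining part is the genus-zero (conic-type) case, where I would invoke Proposition \ref{Prop non ruled necessary condns}(ii), identifying the relative algebraic closure of $kv$ in $Kv$ with that in $k(X)v$, to argue that the finite constant field $k(\alpha)v$ is already absorbed in $Kv$, whence $Kv\cdot k(\alpha)v=Kv$ would itself be ruled, a contradiction. Making this last step precise — controlling exactly how $k(\alpha)v$ sits inside $k(X)v$ and $Kv$, and matching the generators $\eta$ and $y=d(X-\alpha)^e v$ through the index $e/e_F$ — is the decisive point; everything else is a formal consequence of tameness and the fundamental inequality.
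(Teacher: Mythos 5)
Your route is genuinely different from the paper's and is viable, but the step you flag as ``delicate'' is both easier than you fear and is precisely the paper's central argument. The paper does not go through Kummer theory over $F(X)$ at all: it first invokes the generalized Abhyankar lemma [\ref{Dutta Kuh abhyankars lemma}, Theorem 3(2)] (applicable since $vK=vk(X)$ and $Kv|k(X)v$ is separable of degree $p$) to get $(K.F)v=Kv.F(X)v$, then shows $Kv\sect F(X)v=k(X)v$ and concludes by linear disjointness of the Galois extension $Kv|k(X)v$ from $F(X)v$. You instead establish $[K.F:F(X)]=p$ upstairs and bound the residue degree from below by the degree of $(\sqrt[p]{\Phi})v$ over $F(X)v$; this works, and the whole lemma then reduces, as you say, to excluding $\sqrt[p]{\Phi v}\in F(X)v$, equivalently $Kv\subseteq F(X)v$. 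At that point no genus computation, no distinction between a ``genus-zero conic-type case'' and the rest, and no matching of generators or ramification indices is needed: by [\ref{APZ characterization of residual trans extns}, Theorem 2.1] one already has $k(\a)v\subseteq k(X)v\subseteq Kv$, and $F(X)v=k(\a)v\,(\eta)$ because $\a\in F$ and $Fv=k(\a)v$; so if $Kv\subseteq F(X)v$, L\"uroth applied to the intermediate field $Kv$ of $k(\a)v\,(\eta)|k(\a)v$ gives $Kv=k(\a)v\,(\xi)$, which is purely transcendental over the finite extension $k(\a)v$ of $kv$ and hence ruled --- the desired contradiction, uniformly in all cases. (This is exactly how the paper rules out $Kv\subseteq F(X)v$.) Two smaller remarks: the UFD/polynomial detour and the $e(K.F|F(X))=1$ discussion are unnecessary for your argument --- once some $p$-th root of $\Phi v$ lies in $F(X)v$, all of them do because $kv$ contains the $p$-th roots of unity, so $Kv=k(X)v\,(\sqrt[p]{\Phi v})\subseteq F(X)v$ directly; and your normalization $v\Phi=0$ should be effected by a polynomial $f\in k[X]$ with $vf=-\frac1p v\Phi$ (available by [\ref{APZ characterization of residual trans extns}, Theorem 2.1]) if you do want to cite Lemma \ref{Lemma Fv poly over kv}. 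What your approach buys is independence from the Abhyankar-type result on composita of residue fields; what the paper's buys is the stronger identity $(K.F)v=Kv.F(X)v$ and a shorter path once that tool is accepted.
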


\begin{proof}
	It follows from Proposition \ref{Prop non ruled necessary condns} that $[Kv:k(X)v] = p= [K:k(X)]$. Hence $vK= vk(X)$ by the Fundamental Inequality. The condition that $p\neq\ch kv$ implies that $Kv|k(X)v$ is a separable extension. It now follows from [\ref{Dutta Kuh abhyankars lemma}, Theorem 3(2)] that 
	\[ (K.F)v = Kv. F(X)v. \]
	The fact that $\a\in F$ implies that $(\a,\g)$ is also a minimal pair of definition for $(F(X)|F,v)$. Then $F(X)v = Fv(Z) = k(\a)v\, (Z)$ for some $Z$ transcendental over $k(\a)v$ by [\ref{APZ characterization of residual trans extns}, Corollary 2.3]. It further follows from [\ref{APZ characterization of residual trans extns}, Theorem 2.1] that $k(\a)v\subset Kv$. If $Kv\subseteq F(X)v$, then by L$\ddot{u}$roth's Theorem $Kv|k(\a)v$ is also a simple transcendental extension, thereby contradicting the non-ruled hypothesis of $Kv|kv$. We thus have the chain
	\[ k(X)v \subseteq Kv\sect F(X)v \subsetneq Kv. \]
	The condition $[Kv:k(X)v] = p$ then implies that $Kv\sect F(X)v = k(X)v$. Now, $Kv|k(X)v$ is a normal extension and hence Galois by the earlier separability assertion [\ref{ZS2}, Ch. VI, \S 12, Theorem 21]. It follows that $Kv$ and $F(X)v$ are linearly disjoint over $k(X)v$. Thus,
	\[  [(K.F)v:F(X)v] = [Kv. F(X)v:F(X)v] = [Kv:k(X)v] = p. \]
\end{proof}

%======================================================================================

\section{The case when $e$ and $p$ are coprime}\label{Sect e p coprime}

\begin{Proposition}\label{Prop residue field non ruled min pair}
	Assume that $e$ and $p$ are coprime. Further, assume that $Kv|kv$ is not ruled. Then $K(\a)v|k(\a)v$ is also not ruled. 
\end{Proposition}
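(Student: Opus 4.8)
The plan is to argue by contradiction: assume that $Kv|kv$ is not ruled but that $K(\a)v|k(\a)v$ \emph{is} ruled, and produce a contradiction. The starting point is Lemma \ref{Lemma (K.F)v:Fv=p} applied with $F=k(\a)$, which is legitimate since $\a\in k(\a)$ and $Fv=k(\a)v$ hold trivially. This yields $[K(\a)v:k(\a)(X)v]=p$ and, from its proof, the compositum description $K(\a)v=Kv.k(\a)(X)v$ together with the linear disjointness of $Kv$ and $k(\a)(X)v$ over $k(X)v$. In parallel, Proposition \ref{Prop non ruled necessary condns} supplies $[Kv:k(X)v]=p$ and identifies the relative algebraic closure $F_0:=\overline{kv}\sect Kv=\overline{kv}\sect k(X)v$, which is the field of constants of $Kv|kv$.

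Next I would pin down the function-field shapes of the residue fields. Since $(\a,\g)$ is a minimal pair for $(k(\a)(X)|k(\a),v)$ with $\a\in k(\a)$, one has $k(\a)(X)v=k(\a)v(Z)$ with $Z$ transcendental, while the Ruled Residue Theorem gives $k(X)v=F_0(Z_0)$ with $Z_0$ transcendental. A short argument places $F_0\subseteq k(\a)v$ (the relative algebraic closure of $kv$ in the purely transcendental field $k(\a)v(Z)$ is $k(\a)v$ itself, and $F_0$ is algebraic over $kv$). Hence $K(\a)v=(Kv.k(\a)v)(Z)$, and since both $K(\a)v$ and $Kv.k(\a)v$ have transcendence degree one over $k(\a)v$, the element $Z$ is algebraic over $Kv.k(\a)v$; that is, $K(\a)v$ is a finite extension of the constant-field extension $Kv.k(\a)v$ of $Kv$. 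Because $p\neq\ch kv$ and $\zeta_p$ reduces to a primitive $p$-th root of unity in $kv$, both $Kv|k(X)v$ and $K(\a)v|k(\a)(X)v$ are cyclic of degree $p$, and the compositum relation forces them to be cut out by the same Kummer class $\mu\in k(X)v$.

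Now I would exploit the assumed ruledness. Writing $K(\a)v=G(T)$ with $G=\overline{k(\a)v}\sect K(\a)v$ its field of constants, one notes $G$ is finite over $kv$, hence $G\subseteq\overline{kv}$ and $G$ is also the field of constants of $K(\a)v|kv$, with $F_0\subseteq G$. Forming $Kv.G$ inside $G(T)$ and invoking L\"uroth's theorem shows $Kv.G=G(T')$ is rational over $G$; in other words, the constant-field extension from $F_0$ to $G$ rationalizes $Kv$, so $Kv$ is \emph{geometrically} rational (genus zero) yet, being non-ruled, has no $F_0$-rational generic structure, i.e. it is of conic type. The role of the hypothesis $\gcd(e,p)=1$ is to control the degree of the operative constant extension relative to $p$: tracking the ramification index $e=(vk(\a)(X):vk(\a))$ through the tower $k(X)\subseteq k(\a)(X)$ and using \eqref{eqn *} shows that the contribution to $[G:F_0]$ arising from the passage $k\to k(\a)$ is governed by $e$ and is therefore prime to $p$.

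The main obstacle, and the heart of the matter, is the final descent: to conclude that a degree-$p$ cyclic cover $Kv|k(X)v$ which becomes rational after a constant-field extension of degree prime to $p$ must already be rational over $F_0$, contradicting non-ruledness. This is the higher-degree analogue of the classical fact that an anisotropic conic remains anisotropic over odd-degree extensions (Springer's theorem, the case $p=2$); the contradiction is obtained by corestricting an $F_0$-rational place of $Kv.G=G(T')$ down along $G|F_0$ and playing it against the $p$-cyclic structure carried by $\mu$, the composite restriction--corestriction being multiplication by the prime-to-$p$ degree $[G:F_0]$ and hence injective on the relevant obstruction. I expect the delicate point to be formulating this descent purely in terms of Kummer classes and values, so as to remain valid when $\ch kv>0$ (where genus is not stable under inseparable constant extensions), and verifying rigorously that $\gcd(e,p)=1$ indeed renders the decisive extension degree prime to $p$.
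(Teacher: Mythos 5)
Your overall frame---prove the contrapositive by descending ruledness of $K(\alpha)v|k(\alpha)v$ to ruledness of $Kv|kv$---is legitimate, and your opening moves (Lemma \ref{Lemma (K.F)v:Fv=p} with $F=k(\alpha)$, Proposition \ref{Prop non ruled necessary condns}, the description $k(\alpha)(X)v=k(\alpha)v(Z)$) match the paper. But there are two genuine gaps, and the second is the entire content of the proposition. First, you misidentify the role of the hypothesis $\gcd(e,p)=1$. You claim it makes the relevant constant-field degree $[G:F_0]$ prime to $p$; but $e=(vk(\alpha,X):vk(\alpha))$ is a value-group index, not a residue degree, and nothing bounds $[k(\alpha)v:kv]$, hence $[G:F_0]$, away from multiples of $p$, since $[k(\alpha):k]$ is unrestricted. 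In the paper the coprimality is used for something else entirely: since $\frac{1}{p}v(aX^p+bX+c)\in vK(\alpha)=vk(\alpha,X)$ and $e\cdot vk(\alpha,X)\subseteq vk(\alpha)$, one can choose $t\in k(\alpha)$ with $vt^p(aX^p+bX+c)^e=0$, and $\gcd(e,p)=1$ guarantees $K(\alpha)=k(\alpha,X)(\sqrt[p]{t^p(aX^p+bX+c)^e})$, so that Proposition \ref{Prop Kv generated by residue} and Lemma \ref{Lemma Fv poly over kv} apply to a radicand of value zero.

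Second, the ``Springer-type'' descent you defer to the end is precisely the missing proof, and the paper avoids needing it by a concrete computation for which your argument has no substitute. The paper first shows that $k(\alpha)v$ is relatively algebraically closed in $K(\alpha)v$: if not, then $K(\alpha)v=k(\alpha)v(\theta,Z)$ and the residue $t^p(aX^p+bX+c)^ev$ must be a $p$-th power $F(Z)^p$ with $F\in k(\alpha)v(\theta)[Z]$; by Lemma \ref{Lemma Fv poly over kv} this residue is a polynomial in $Z$ of degree at most $p$, so $\deg F\in\{0,1\}$, and the degree-$p$ coefficient vanishes because $v(aY^p)>v(ap\alpha Y^{p-1})$ (using $v\alpha<\gamma$ and $vp=0$ --- this is where the explicit shape $aX^p+bX+c$ enters, which your sketch never uses). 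Hence the residue is a constant $\delta\in k(\alpha)v$ and $Kv=k(\alpha)v(\sqrt[p]{\delta},Z')$ would be ruled, a contradiction. With relative algebraic closedness in hand, ruledness of $K(\alpha)v|k(\alpha)v$ would force $K(\alpha)v=k(\alpha)v(T)$, and L\"uroth applied to the genuine intermediate field $k(\alpha)v\subseteq Kv\subseteq k(\alpha)v(T)$ finishes with no corestriction at all. Unless you can actually prove your descent statement---including the case $p=2$, where it really is Springer's theorem and really does require an odd-degree constant extension that you have not secured---the proposal does not constitute a proof.
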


\begin{proof}
	If $\a\in k$ then the assertion is vacuously true. We thus assume that $\a\notin k$. As a consequence, 
	\[ vX = v\a < \g = vY \text{ where } Y:= X-\a. \]
	Take $d\in k(\a)$ such that $vd = -e\g$, that is, $vdY^e = 0$. Then $Z:= dY^e v$ is transcendental over $k(\a)v$, and $k(\a,X)v = k(\a)v\, (Z)$ by [\ref{APZ characterization of residual trans extns}, Corollary 2.3]. It follows from Lemma \ref{Lemma (K.F)v:Fv=p} that
	\[  [K(\a)v:k(\a,X)v] = [Kv:k(X)v] = p. \]
	As a consequence, $vK(\a) = vk(\a,X)$. It has been observed in [\ref{APZ characterization of residual trans extns}, Theorem 2.1] that $(vk(\a,X):vk(\a)) = e$. Then $\dfrac{e}{p} v(aX^p + bX +c) \in vk(\a)$. Take $t\in k(\a)$ such that $vt = - \dfrac{e}{p} v(aX^p + bX +c)$, that is,
	\[  vt^p (aX^p + bX+c)^e = 0. \]
	The fact that $e$ and $p$ are coprime implies that $k(\a,X)(\sqrt[p]{aX^p+bX+c}) = k(\a,X)(\sqrt[p]{(aX^p+bX+c)^e})$. As a consequence,
	\[ K(\a) = k(\a,X)(t\sqrt[p]{(aX^p+bX+c)^e})   =  k(\a,X)(\sqrt[p]{t^p (aX^p+bX+c)^e}). \]
	It now follows from Proposition \ref{Prop Kv generated by residue} that 
	\begin{align*}
		K(\a)v &= k(\a,X)v \, (\sqrt[p]{t^p (aX^p+bX+c)^e v}).
	\end{align*} 
	\pars Suppose that $k(\a)v$ is not relatively algebraically closed in $K(\a)v$. Take $\th\in \overline{k(\a)v}\sect K(\a)v$ such that $\th\notin k(\a)v$. Recall that $k(\a)v$ is relatively algebraically closed in $k(\a,X)v$. It follows from Lemma \ref{Lemma K rel alg closed in L} that $k(\a,X)v$ and $k(\a)v \, (\th)$ are linearly disjoint over $k(\a)v$. As a consequence, $[k(\a,X)v \, (\th): k(\a,X)v] > 1$. The facts that $\th\in K(\a)v$ and $[K(\a)v: k(\a,X)v]=p$ then imply that $K(\a)v = k(\a,X)v \, (\th)$. Thus,
	\[ K(\a)v = k(\a)v \, (\th,Z). \]
	Recall that $Z$ is transcendental over $k(\a)v$ and hence over $k(\a)v \, (\th)$. The fact that $\sqrt[p]{t^p (aX^p+bX+c)^e v}$ is contained in $K(\a)v$ implies that we have an expression $\sqrt[p]{t^p (aX^p+bX+c)^e v} = \dfrac{F(Z)}{G(Z)}$ where $F(Z)$ and $G(Z)$ are coprime polynomials over $k(\a)v \, (\th)$. Consequently, 
	\[ G(Z)^p (t^p (aX^p+bX+c)^e v) = F(Z)^p. \]
	It follows from Lemma \ref{Lemma Fv poly over kv} that $t^p (aX^p+bX+c)^e v \in k(\a)v \, [Z] \subseteq k(\a)v \, (\th)[Z]$. Hence $G(Z)^p$ divides $F(Z)^p$ over $k(\a)v \, (\th)$. The coprimality of $F(Z)$ and $G(Z)$ then implies that $G(Z)$ is a unit. After renaming, we obtain an expression
	\[ t^p (aX^p+bX+c)^e v = F(Z)^p \text{ where } F(Z)\in k(\a)v \, (\th)[Z]. \]
	Observe that $\deg_Y t^p (aX^p+bX+c)^e = \deg_X t^p (aX^p+bX+c)^e = pe$. Write $t^p (aX^p+bX+c)^e = \sum_{i=0}^{pe}C_i Y^i$. It follows from the proof of Lemma \ref{Lemma Fv poly over kv} that 
	\[ t^p (aX^p+bX+c)^ev = \sum_{\l=0}^{p} C_{\l e}Y^{\l e}v = \sum_{\l=0}^{p}\dfrac{C_{\l e}}{d^\l} v \, Z^\l.  \]
	Hence $\deg _Z (t^p (aX^p+bX+c)^ev) \leq p$. It follows that either $\deg _Z F(Z) = 1$ or $\deg _Z F(Z)=0$, that is, either $\deg _Z (t^p (aX^p+bX+c)^ev) = p$ or $\deg_Z (t^p (aX^p+bX+c)^ev) = 0$. 
	
	\pars We now write 
    \begin{align*}
		aX^p &= \sum_{i=0}^{p} a \binom{p}{i} \a^i Y^{p-i} = \sum_{i=0}^{p-2} a^\prime_i Y^{p-i} + ap\a^{p-1}Y + a\a^p,\\
		bX&= bY + b\a,\\
		aX^p + bX + c &=  \sum_{i=0}^{p-2} a^\prime_i Y^{p-i} + (ap\a^{p-1}+b)Y + (a\a^p + b\a +c)   =  \sum_{i=0}^{p-2} a^\prime_i Y^{p-i} + b^\prime Y + c^\prime. 
	\end{align*}
    By the multinomial theorem, we have that
    \[  t^p (aX^p+bX+c)^e = \sum_{k_0 + \dotsc + k_p = e}  t^p \binom{e}{k_0, \dotsc, k_p} (a^\prime_0 Y^p)^{k_0} \dotsc (c^\prime)^{k_p}, \]
    where 
    \[ \binom{e}{k_0, \dotsc, k_p} := \dfrac{e!}{(k_0)!\dotsc (k_p)!}. \]
	We have observed in the proof of Lemma \ref{Lemma Fv poly over kv} that 
	\[  t^p (aX^p+bX+c)^ev = \sum_{k_0 + \dotsc + k_p = e}  \mathlarger{\mathlarger{\mathlarger{\mathlarger{\mathlarger{(}}}}}  t^p \binom{e}{k_0, \dotsc, k_p} (a^\prime_0 Y^p)^{k_0} \dotsc (c^\prime)^{k_p} \mathlarger{\mathlarger{\mathlarger{\mathlarger{\mathlarger{)}}}}} v. \]
	The leading term of $t^p (aX^p+bX+c)^ev$ is thus given by $t^p (a^\prime_0 Y^p)^e v$. Now, the fact that $(\a,\g)$ is a minimal pair of definition for $(k(\a,X)|k(\a),v)$ implies that 
	\[ v(aX^p + bX + c) = \min \{ v(a^\prime_i Y^{p-i}), v(b^\prime Y), vc^\prime \}. \]
	We observe that $a^\prime _0 = a$ and $a^\prime_1 = ap\a$. The facts that $p\neq\ch kv$ and $vY= \g > v\a$ imply that
	\[ v(a^\prime_0 Y^p) = va + p\g > va + v\a + (p-1)\g = v(a^\prime_1 Y^{p-1}). \]
	As a consequence, 
	\[ vt^p (a^\prime_0 Y^p)^e > vt^p(a^\prime_1 Y^{p-1})^e \geq v t^p (aX^p+bX+c)^e=0. \]
	It follows that $t^p (a^\prime_0Y^p)^ev = 0$ and hence $\deg_Z (t^p (aX^p+bX+c)^ev) < p$. From our previous discussions, we conclude that $\deg_Z (t^p (aX^p+bX+c)^ev)=0$. The constant term of $t^p (aX^p+bX+c)^ev$ is given by $t^p(c^\prime)^ev$. It follows that
	\begin{equation}\label{eqn 1}
		t^p (aX^p+bX+c)^ev = t^p(c^\prime)^ev \in k(\a)v.
	\end{equation}

    \pars By [\ref{APZ characterization of residual trans extns}, Theorem 2.1] we can take $f(X)\in k[X]$ with $\deg_X f < [k(\a):k]$ such that $vf = vt$, that is, $vf^p(aX^p+bX+c)^e= 0$. Observe that $K = k(X)(\sqrt[p]{f^p(aX^p+bX+c)^e})$. Then $Kv = k(X)v(\sqrt[p]{f^p(aX^p+bX+c)^ev})$ by Proposition \ref{Prop Kv generated by residue}. It follows from [\ref{Dutta min fields implicit const fields}, Lemma 6.1] that $\dfrac{f}{t} v \in k(\a)v$. As a consequence, 
    \begin{align*}
    	f^p(aX^p+bX+c)^ev &= (\dfrac{f}{t}v)^p t^p(aX^p+bX+c)^ev \in k(\a)v,
    \end{align*}
    where the last assertion holds true in light of (\ref{eqn 1}). Set $\d:= f^p(aX^p+bX+c)^ev$. It follows from [\ref{APZ characterization of residual trans extns}, Corollary 2.3] that $k(X)v = k(\a)v \, (Z^\prime)$ where $Z^\prime$ is transcendental over $k(\a)v$. Our observations then imply that
    \[ Kv = k(\a)v \, (\sqrt[p]{\d}, Z^\prime) \]
    which contradicts the assumption that $Kv|kv$ is non-ruled. It follows that $k(\a)v$ is relatively algebraically closed in $K(\a)v$.
    
    \pars Suppose that $K(\a)v|k(\a)v$ is ruled. Then $k(\a)v$ being relatively algebraically closed in $K(\a)v$ implies that $K(\a)v$ is a simple transcendental extension of $k(\a)v$. Since $k(\a)v \subsetneq Kv\subseteq K(\a)v$, L$\ddot{u}$roth's Theorem implies that $Kv|k(\a)v$ is also a simple transcendental extension, which again contradicts the non-ruled hypothesis of $Kv|kv$. We thus have the proposition. 
\end{proof}

\begin{Theorem}\label{Thm e and p coprime}
	Assume that $e$ and $p$ are coprime. Further, assume that $Kv|kv$ is not ruled. Then,
	\[ v(aX^p+bX+c) \geq v(aX^p) = \min \{ v(bX),vc \}.  \]
	Equality holds if and only if $v$ is a Gau{\ss} valuation. Otherwise, we have 
	\[ v(aX^p+bX+c) > v(aX^p) = v(bX). \] 
\end{Theorem}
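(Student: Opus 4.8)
The plan is to push everything through the residual polynomial already manufactured in Section \ref{Sect e p coprime} and then read the valuations of the three monomials $aX^p,bX,c$ off its shape. First I would invoke Proposition \ref{Prop residue field non ruled min pair}: since $e$ and $p$ are coprime and $Kv|kv$ is non-ruled, $K(\alpha)v|k(\alpha)v$ is non-ruled as well. Writing $Y:=X-\alpha$ and $Z:=dY^e v$ as in that proof, the computation there gives $K(\alpha)v=k(\alpha)v(Z)(\sqrt[p]{\Theta(Z)})$ with $\Theta(Z):=t^p(aX^p+bX+c)^e v\in k(\alpha)v[Z]$, and $\deg_Z\Theta=\max S$, where $S$ is the set of indices $j$ for which the Taylor term $C_jY^j$ of $f:=aX^p+bX+c$ at $\alpha$ (so $C_j=f^{(j)}(\alpha)/j!$) realises $vf=\min_j(vC_j+j\gamma)$.

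The crucial reduction is a rationality criterion: whenever $\Theta$ is a unit times a power of a single linear (or constant) factor, the field $k(\alpha)v(Z)(\sqrt[p]{\Theta})$ is ruled — one parametrises $W^p=u(Z-r)^m$ using $\gcd(m,p)=1$ (valid since $1\le m\le p$ and $p$ is prime, the case $m=p$ being trivial). Hence non-ruledness forces $\Theta$ to have at least two distinct roots, which through Lemma \ref{Lemma Fv poly over kv} translates into $|S|\ge 2$ and $\max S\ge 2$. Next I compute the relevant valuations. For $j\ge 2$ one has $C_j=a\binom{p}{j}\alpha^{p-j}$, whence
\[ v(C_jY^j)-v(aX^p)=v\binom{p}{j}+j(\gamma-v\alpha)>0, \]
because $\gamma>v\alpha$ and $v\binom{p}{j}\ge 0$; so every Taylor term of index $\ge 2$ strictly exceeds $v(aX^p)$, and in particular $p\notin S$ in the non-Gauß case. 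In the Gauß case $\alpha=0$, the only available index $\ge 2$ is $p$, so $\max S\ge 2$ forces $p\in S$ and thus $v(f)=v(aX^p)$; a second element of $S$ then yields $v(aX^p)=\min\{v(bX),vc\}$. In the non-Gauß case $\alpha\neq 0$ and $vX=v\alpha<\gamma$, so $\max S\ge 2$ forces $v(f)=v(C_{j^*}Y^{j^*})>v(aX^p)$ with $j^*=\max S$. Together these give $v(f)\ge v(aX^p)$ with equality exactly when $v$ is Gauß, settling the equality clause and the first two displayed relations.

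To finish the non-Gauß case I would examine the constant Taylor term: $v(f(\alpha))\ge v(f)>v(aX^p)=v(a\alpha^p)$ forces, by the ultrametric law, $v(b\alpha+c)=v(a\alpha^p)=v(aX^p)$, and since $v(b\alpha)=v(bX)$ this already gives $v(aX^p)=\min\{v(bX),vc\}$ whenever $v(bX)\neq vc$. It then remains to exclude $vc=v(aX^p)<v(bX)$ and to settle the tie $v(bX)=vc$, so as to obtain the asserted $v(aX^p)=v(bX)$. Here I would argue that the regime $vc=v(aX^p)<v(bX)$ forces $v$ to be Gauß: the Newton polygon of $f$ then consists of the single segment from $(0,vc)$ to $(p,va)$ with $(1,vb)$ strictly above it, so all $p$ roots of $f$ have valuation $\tfrac1p v(c/a)=vX$ and, because $p\neq\ch kv$ and $k$ contains $\zeta_p$, pairwise distinct residues; consequently $\max_d v(X-d)=vX$, i.e. $\gamma=vX$ and $v$ is Gauß, contrary to assumption. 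The collinear configuration $v(aX^p)=v(bX)=vc$ is the genuinely non-Gauß tie and already yields $v(aX^p)=v(bX)$.

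I expect this last step to be the main obstacle: distinguishing $v(aX^p)=v(bX)$ from $v(aX^p)=vc$ in the non-Gauß case — equivalently, showing that $vc=v(aX^p)<v(bX)$ can occur only for a Gauß valuation. This is precisely where the degree-one monomial $bX$ complicates the picture, and it is where the minimality of the pair $(\alpha,\gamma)$ and the precise distribution of the roots of $f$ must be brought in; I would also expect the coprimality of $e$ and $p$ to be needed to rule out the residual degenerate configurations. The elementary rationality criterion of the second paragraph, though routine, must be phrased carefully so that it produces \enquote{ruled} in the sense of rationality over a finite extension.
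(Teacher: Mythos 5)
Your reduction to $\alpha\in k$ via Proposition \ref{Prop residue field non ruled min pair}, the computation of the residue polynomial $\Theta(Z)=t^p(aX^p+bX+c)^e v$ with $\deg_Z\Theta=\max S$, and the principle that a $\Theta$ which is a unit times a power of a single linear factor yields a ruled residue field are all essentially the paper's strategy; your packaging of the latter as one rationality criterion is a clean reorganisation of computations the paper carries out case by case. Up to the conclusions that $v(aX^p+bX+c)\ge v(aX^p)$ with equality exactly in the Gau{\ss} case, and that $v(aX^p)=\min\{v(bX),vc\}$ in the Gau{\ss} case, the argument is sound and parallel to the paper's.

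The final step is where the proposal breaks down, in two places. First, your claim that the regime $vc=v(aX^p)<v(bX)$ \emph{forces $v$ to be Gau{\ss}} is false, and the proposed Newton-polygon proof is a non sequitur: $\gamma=\max_{d\in\overline k}v(X-d)$ is a supremum over \emph{all} algebraic $d$, not over the roots of $aX^p+bX+c$, so knowing that the $p$ roots of $f$ all have value $vX$ with pairwise distinct residues says nothing about whether some unrelated $d\in\overline k$ satisfies $v(X-d)>vX$. Non-Gau{\ss} residue-transcendental valuations with $vc=v(aX^p)<v(bX)$ certainly exist; what is true, and what must be proved, is that for such valuations $Kv|kv$ is ruled. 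The paper obtains this from the same inequality you encode as $\max S\ge 2$: whenever $v(a\alpha^{p-1})\ne vb$ there is no cancellation in $b'=ap\alpha^{p-1}+b$, so $vb'=\min\{v(ap\alpha^{p-1}),vb\}$, and then $v(b'Y)\le v(ap\alpha^{p-1}Y)<v(a'_iY^{p-i})$ for all $i\le p-2$ (the gap being at least $(p-1-i)(\gamma-v\alpha)>0$); this forces $\max S\le 1$, a linear $\Theta$, and a ruled residue field. Second, in the tie $v(bX)=vc$ you assert that $v(aX^p)=v(bX)$ "already" follows, but your constant-term argument only gives $v(a\alpha^p)=v(b\alpha+c)\ge v(b\alpha)$, and cancellation in $b\alpha+c$ leaves $v(aX^p)>v(bX)$ open; this case, like the configuration $v(aX^p)=v(bX)>vc$ (excluded in the paper via $vc'=vc$), is settled by the same no-cancellation mechanism. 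So the missing idea is to run the cancellation analysis on the recentred coefficients $b'=ap\alpha^{p-1}+b$ and $c'=a\alpha^p+b\alpha+c$ against inequality (\ref{eqn 2}), rather than to argue about the location of the roots of $f$.
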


\begin{proof}
	In light of Proposition \ref{Prop residue field non ruled min pair} we can assume that $\a\in k$ without any loss of generality. Set $Y:= X-\a$ and write $aX^p+bX+c = \sum_{i=0}^{p-2} a^\prime_i Y^{p-i} + b^\prime Y +c^\prime$. 
	
	\pars First, we assume that $v$ is not a Gau{\ss} valuation, that is, $vX=v\a<\g = vY$. Suppose that $\min\{ v(a^\prime_i Y^{p-i}) \} > \min\{ v(b^\prime Y), vc^\prime  \}$. Then $v(aX^p+bX+c) = \min\{ v(a^\prime_i Y^{p-i}), v(b^\prime Y), vc^\prime \} = \min\{ v(b^\prime Y),vc^\prime \}$. Take $t\in k$ such that $vt^p(aX^p+bX+c)^e=0$ and take non-negative integers $k_0,\dotsc,k_p$ satisfying $k_0+\dotsc+k_p=e$. We first assume that $vc^\prime \leq v(b^\prime Y)$. Then $vc^\prime < v(a^\prime_{p-i}Y^i)$ for all $i$. It follows that $v(c^\prime)^{k_{p-i}} \leq v(a^\prime_{p-i}Y^i)^{k_{p-i}}$ and the inequality is strict whenever $k_{p-i}>0$. Moreover, $v(c^\prime)^{k_{p-1}}\leq v(b^\prime Y)^{k_{p-1}}$. Thus
	\[ v t^p (a^\prime_0 Y^p)^{k_0}\dotsc (c^\prime)^{k_p} \geq v t^p (c^\prime)^{k_0}\dotsc (c^\prime)^{k_p} = v(c^\prime)^e,  \] 
	and the inequality is strict whenever $k_{p-i}>0$ for some $i>1$. Employing the same arguments when $v(b^\prime Y)\leq vc^\prime$, we conclude that
	\begin{align*}
		0 &= vt^p (aX^p+b^p+c)^e = \min\{ vt^p(b^\prime Y)^e, vt^p (c^\prime)^e  \}\\
		&\leq v t^p (a^\prime_0 Y^p)^{k_0}\dotsc (c^\prime)^{k_p},
	\end{align*}
where the inequality is strict whenever $k_{p-i}>0$ for some $i>1$. Consider the expansion 
\[  t^p (aX^p+bX+c)^e = \sum_{k_0 + \dotsc + k_p = e}  t^p \binom{e}{k_0, \dotsc, k_p} (a^\prime_0 Y^p)^{k_0} \dotsc (c^\prime)^{k_p}. \]
Each monomial $t^p \binom{e}{k_0, \dotsc, k_p} (a^\prime_0 Y^p)^{k_0} \dotsc (c^\prime)^{k_p}$ contributing to the term corresponding to $Y^{\l e}$ satisfies the additional condition $pk_0 + \dotsc + 2k_{p-2}+k_{p-1} = \l e$. Since $0\leq k_i \leq e$, it follows that $k_{p-i}>0$ for some $i>1$ whenever $\l>1$. Writing $t^p(aX^p+bX+c)^e = \sum_{i=0}^{pe} C_i Y^i$, we then obtain from our previous discussions that
\[ vC_{\l e} > 0 \text{ whenever } \l > 1. \]
It follows from the proof of Lemma \ref{Lemma Fv poly over kv} that
\[ t^p(aX^p+bX+c)^e v = \sum_{\l=0}^{p} C_{\l e}Y^{\l e}v. \]
Since $C_{\l e}v = 0$ whenever $\l>1$, we obtain that
\begin{align*}
	t^p(aX^p+bX+c)^e v &= C_eY^e v + C_0 v = t^p(b^\prime Y)^e v + t^p (c^\prime)^e v \\
	&= \dfrac{t^p (b^\prime)^e}{d}v Z + t^p (c^\prime)^e v\\
	&= \b_1 Z + \b_2,
\end{align*} 
where $d\in k$ such that $vd=-e\g$, $Z:= dY^e v$, $\b_1 := \dfrac{t^p (b^\prime)^e}{d}v \in kv$ and $\b_2 := t^p (c^\prime)^e v\in kv$. It now follows from Proposition \ref{Prop Kv generated by residue} that 
\[ Kv = k(X)v \,(\sqrt[p]{\b_1 Z + \b_2}) = kv \,(Z, \sqrt[p]{\b_1 Z + \b_2}). \]
Thus $Kv = kv\,(\sqrt[p]{\b_1 Z + \b_2})$ if $\b_1 \neq 0$ and $Kv = kv \,(\sqrt[p]{\b_2},Z)$ if $\b_1  = 0$. Both scenarios contradict the assumption that $Kv|kv$ is not ruled. We have thus shown that
\begin{equation}\label{eqn 2}
	\min\{ v(a^\prime_i Y^{p-i}) \} \leq \min\{ v(b^\prime Y), vc^\prime  \}.
\end{equation}
As a consequence, 
\[ v(aX^p+bX+c) = \min \{ v(a^\prime_i Y^{p-i}) \}. \]
Observe that 
\[ v(a^\prime_i Y^{p-i})  \geq va + iv\a + (p-i)\g > v(ap\a^{p-1}Y) > v(a\a^p) \text{ for all } i\in\{0,\dotsc,p-2\}. \]
It follows that 
\[ v(aX^p+bX+c) > v(a\a^p) = v(aX^p). \]
Suppose that $v(aX^p)\neq v(bX)$. Then $v(a\a^{p-1}) = v(aX^{p-1}) \neq vb$. Consequently, $v(ap\a^{p-1}) = v(a\a^{p-1})\neq vb $. It then follows from the triangle inequality that $vb^\prime=v(ap\a^{p-1}+b) = \min\{v(ap\a^{p-1}) ,vb \} $. As a consequence, 
\[ v(a^\prime_i Y^{p-i}) > v(ap\a^{p-1}Y) \geq v(b^\prime Y) \text{ for all } i=0,\dotsc,p-2, \]
which contradicts (\ref{eqn 2}). Thus
\[ v(aX^p) = v(bX). \]
Suppose that $v(aX^p) = v(bX) > vc$. Then $v(a\a^p) = v(b\a) > vc$. By the triangle inequality, we obtain that $vc^\prime = v(a\a^p + b\a +c) = vc$. Thus $v(a^\prime_i Y^{p-i}) > v(a\a^p) > vc = vc^\prime$ for all $i=0,\dotsc,p-2$, which again contradicts (\ref{eqn 2}). We have thus arrived at the following relations:
\[ v(aX^p+bX+c) > v(aX^p) = v(bX) = \min\{ v(bX),vc \}. \]

\pars We now assume that $v$ is a Gau{\ss} valuation. Then $v(aX^p+bX+c) = \min\{ v(aX^p),v(bX),vc \}\leq v(aX^p)$ by definition. Write 
\[ t^p(aX^p+bX+c)^e = \sum_{k_0 + k_1 + k_2 = e}  t^p \binom{e}{k_0, k_1, k_2} (aX^p)^{k_0}(bX)^{k_1}c^{k_2}. \]
Suppose that $v(aX^p) > \min\{ v(bX), vc \}$. Then $v(aX^p+bX+c) = \min\{ v(bX), vc \} < v(aX^p)$. As a consequence, 
\begin{align*}
	0 = v t^p (aX^p+bX+c)^e  &= \min\{ vt^p (bX)^e, vt^pc^e \} \\
	& < vt^p (aX^p)^{k_0}(bX)^{k_1}c^{k_2}\\
	&\leq vt^p\binom{e}{k_0, k_1, k_2} (aX^p)^{k_0}(bX)^{k_1}c^{k_2},
\end{align*}
whenever $k_0+k_1+k_2 = e$ and $k_0>0$. It follows that
\[ t^p (aX^p+bX+c)^e v = \sum_{k_1 + k_2 = e}  t^p \binom{e}{k_1, k_2} (bX)^{k_1}c^{k_2}v. \]
Take $d\in k$ such that $vdX^e = 0$ and set $Z:= dX^ev$. Since $t^p(aX^p+bX+c)^ev \in kv[Z]$ by Lemma \ref{Lemma Fv poly over kv}, we obtain from the above expression that 
\[ t^p (aX^p+bX+c)^e v = t^p (bX)^e v + t^pc^ev = \b^\prime_1 Z + \b^\prime_2,  \]
where $\b^\prime_1:= \dfrac{t^p b^e}{d}v\in kv$ and $\b^\prime_2:= t^pc^ev\in kv$. It now follows from Proposition \ref{Prop Kv generated by residue} that 
\[ Kv = k(X)v \,(\sqrt[p]{\b^\prime_1 Z + \b^\prime_2}) = kv \,(Z, \sqrt[p]{\b^\prime_1 Z + \b^\prime_2}), \]
where the final equality follows from [\ref{APZ characterization of residual trans extns}, Corollary 2.3]. Thus $Kv = kv\,(\sqrt[p]{\b^\prime_1 Z + \b^\prime_2})$ if $\b^\prime_1 \neq 0$ and $Kv = kv \,(\sqrt[p]{\b^\prime_2},Z)$ if $\b^\prime_1  = 0$. Since $Kv|kv$ is non-ruled, we have arrived at a contradiction. It follows that $v(aX^p) \leq \min\{ v(bX),vc \}$ and hence 
\[ v(aX^p+bX+c) = v(aX^p). \]
Now suppose that $v(aX^p) < \min\{ v(bX),vc \}$. Then
\[  0 = vt^p (aX^p+bX+c)^e = vt^p(aX^p)^e < vt^p \binom{e}{k_0, k_1, k_2} (aX^p)^{k_0}(bX)^{k_1}c^{k_2} \]
whenever $k_0+k_1+k_2 = e$ and $k_0 < e$. It follows that
\[ t^p(aX^p+bX+c)^e v = t^p (aX^p)^e v = \dfrac{t^p a^e}{d^p}v (dX^ev)^p = \b Z^p, \]
where $\b:= \dfrac{t^p a^e}{d^p}v \in kv$. It then follows from Proposition \ref{Prop Kv generated by residue} that $Kv = k(X)v\, (\sqrt[p]{\b Z^p}) = kv\, (\sqrt[p]{\b},Z)$, which again contradicts the non-ruled hypothesis. We have now proved the theorem.
\end{proof}

%It is well-known that if $k$ is separably algebraically closed then $vk = v\overline{k}$. The following corollary is now immediate:

%\begin{Corollary}
%	Assume that $k$ is separable-algebraically closed. Then the assertions of Theorem \ref{Thm e and p coprime} hold true. 
%\end{Corollary}

%======================================================================================

\section{The case when $p$ divides $e$}\label{Sect p divides e}

At first, we make some general observations. 

\begin{Proposition}\label{Prop p divides e}
	Assume that $p$ divides $e$. Further, assume that $[Kv: k(X)v] = p$. Write $aX^p+bX+c = \sum_{i=0}^{p-2} a^\prime_i Y^{p-i} + b^\prime Y +c^\prime$, where $Y:= X-\a$. Then,
	\[ v(aX^p+bX+c) =\min \{ v(a^\prime_0 Y^p), vc^\prime \}< \min\{ v(a^\prime_i Y^{p-i}), v(b^\prime Y) \}_{i=1, \dotsc, p-2}.  \]
	Moreover, $v$ is a Gau{\ss} valuation whenever $v(aX^p+bX+c) = v(a^\prime_0 Y^p)$.
\end{Proposition}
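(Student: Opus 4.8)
The plan is to read off $v(aX^p+bX+c)$ from the $(X-\alpha)$-adic expansion and to use the hypothesis $[Kv:k(X)v]=p$ to constrain which monomials can attain the minimum. Since $(\alpha,\gamma)$ is a minimal pair of definition for $(k(\alpha)(X)|k(\alpha),v)$ and $\alpha\in k(\alpha)$, the valuation of any polynomial in $Y=X-\alpha$ with coefficients in $k(\alpha)$ is the minimum of the valuations of its monomials; applied to the given expansion this yields $v(aX^p+bX+c)=\min\{v(a'_iY^{p-i}),v(b'Y),vc'\}$, exactly as recorded in the proof of Proposition \ref{Prop residue field non ruled min pair}. So the content of the displayed formula is that this minimum is realized only by the two extreme monomials $a'_0Y^p$ and $c'$.

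First I would extract a divisibility statement from the degree hypothesis. Since $[K:k(X)]=p=[Kv:k(X)v]$, the Fundamental Inequality forces $vK=vk(X)$; as $\sqrt[p]{aX^p+bX+c}\in K$ has value $\tfrac1p\,v(aX^p+bX+c)$, we conclude $v(aX^p+bX+c)\in p\cdot vk(X)$. I then pass to the quotient $vk(\alpha,X)/vk(\alpha)$, which is the cyclic group $\langle\bar\gamma\rangle$ of order $e=(vk(\alpha,X):vk(\alpha))$, $\bar\gamma$ being the class of $\gamma$ (which has order $e$ modulo $vk(\alpha)$ by construction). The image of $vk(X)$ lies in this cyclic group, so the class of $v(aX^p+bX+c)$ lies in $p\langle\bar\gamma\rangle=\langle p\bar\gamma\rangle$.

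Next I compare residues modulo $vk(\alpha)$. The monomial $c'$ has class $0$, $b'Y$ has class $\bar\gamma$, and $a'_iY^{p-i}$ has class $(p-i)\bar\gamma$, so the classes occurring are exactly $j\bar\gamma$ for $j\in\{0,1,\dots,p\}$, with $j=0$ for $c'$ and $j=p$ for $a'_0Y^p$. Because $p$ divides $e$, one checks $j\bar\gamma\in\langle p\bar\gamma\rangle$ precisely when $p\mid j$, i.e. only for $j\in\{0,p\}$ in this range (for $j=lp$ modulo $e$ forces $p\mid j$ once $p\mid e$). Since $v(aX^p+bX+c)$ itself has class in $\langle p\bar\gamma\rangle$ and equals the minimum of the monomial valuations, no monomial with $j\in\{1,\dots,p-1\}$ can attain this minimum, their classes being different; hence each such monomial has strictly larger value. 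This gives $v(aX^p+bX+c)=\min\{v(a'_0Y^p),vc'\}$ together with the strict inequality against $v(b'Y)$ and against $v(a'_iY^{p-i})$ for $i=1,\dots,p-2$.

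For the final assertion, suppose $v(aX^p+bX+c)=v(a'_0Y^p)=va+p\gamma$ while $v$ is not a Gau{\ss} valuation, so that $v\alpha=vX<\gamma$. Using $a'_1=ap\alpha$ and $p\neq\ch kv$ (hence $vp=0$), the monomial $a'_1Y^{p-1}$ has value $va+v\alpha+(p-1)\gamma<va+p\gamma=v(a'_0Y^p)$; but $a'_1Y^{p-1}$ is one of the intermediate monomials, so by the first part its value is strictly larger than $v(aX^p+bX+c)=v(a'_0Y^p)$, a contradiction. Hence $v$ must be Gau{\ss}. I expect the delicate points to be the group-theoretic bookkeeping modulo $vk(\alpha)$ — in particular deducing $j\bar\gamma\in\langle p\bar\gamma\rangle\iff p\mid j$ from $p\mid e$ — and the fact that the last step invokes the monomial $a'_1Y^{p-1}$, which is genuinely present only for $p\geq 3$; the case $p=2$, where the $Y^{p-1}$ contribution is absorbed into $b'Y$, would have to be examined separately.
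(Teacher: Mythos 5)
Your proposal is correct (for $p\geq 3$) and follows essentially the same route as the paper: your reduction modulo $vk(\a)$ with the criterion $j\overline{\g}\in\langle p\overline{\g}\rangle\iff p\mid j$ is exactly the paper's computation that $n\,v(aX^p+bX+c)\in vk(\a)$ for $n=e/p$ forces the minimizing monomial to have $Y$-degree divisible by $p$, and your contradiction via $a^\prime_1Y^{p-1}=ap\a Y^{p-1}$ is the paper's direct deduction $v(a^\prime_0Y^p)<v(a^\prime_1Y^{p-1})\Rightarrow \g<v\a\Rightarrow vX=\g$. Your closing caveat about $p=2$ is well taken and is not something you failed to handle that the paper handles: the paper's proof of the second assertion also invokes $v(a^\prime_0Y^p)<v(a^\prime_1Y^{p-1})$ ``from our previous discussions'', yet for $p=2$ the monomial $a^\prime_1Y^{p-1}$ is absorbed into $b^\prime Y=(ap\a^{p-1}+b)Y$ and is not among the intermediate terms controlled by the first assertion (which may have larger value due to cancellation in $ap\a^{p-1}+b$), so the published argument has the same gap you flagged rather than a resolution of it.
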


\begin{proof}
	Set $n:= \dfrac{e}{p}$. Then $n\in\ZZ_{\geq 1}$. The condition $[Kv:k(X)v]=p=[K:k(X)]$ implies that $vK = vk(X)$. Since $v(aX^p+bX+c) = p v(\sqrt[p]{aX^p+bX+c})$, it follows that 
	\begin{equation}\label{eqn 3}
		nv(aX^p+bX+c) = ev(\sqrt[p]{aX^p+bX+c}) \in vk(\a),
	\end{equation}
	where the containment follows from (\ref{eqn *}). By definition, $v(aX^p+bX+c) = \min \{ v(a^\prime_i Y^{p-i}), v(b^\prime Y), vc^\prime \}$. Suppose that $v(aX^p+bX+c) = v(b^\prime Y)$. It follows from (\ref{eqn 3}) that $nvb^\prime +n\g\in vk(\a)$. $n$ being an integer implies that $(b^\prime)^n\in k(\a)$ and hence $nvb^\prime\in vk(\a)$. As a consequence, $n\g\in vk(\a)$ which contradicts the minimality of $e$. Now, suppose that $v(aX^p+bX+c) = v(a^\prime_i Y^{p-i})$ for some $i\in\{1,\dotsc,p-2\}$. Similar arguments yield that $n(p-i)\g\in vk(\a)$, that is, $e$ divides $n(p-i)$. Consequently, $p$ divides $p-i$, which again leads to a contradiction. We thus have the first assertion of the proposition. 
	\pars Now assume that $v(aX^p+bX+c) = v(a^\prime_0 Y^p)$. We observe from our previous discussions that $v(a^\prime_0 Y^p) < v(a^\prime_1 Y^{p-1})$, that is, 
	\[ v(aY^p) < v(ap\a Y^{p-1}). \]
	The fact that $p\neq\ch kv$ implies that $vp=0$. As a consequence, $\g < v\a$. It now follows from the triangle inequality that $\g = v(X-\a) = vX$ and hence $(0,\g)$ is also a minimal pair of definition for $(k(X)|K,v)$, that is, $v$ is a Gau{\ss} valuation.
\end{proof}

\begin{Theorem}\label{Thm p divides e}
	Assume that $p$ divides $e$. Further, assume that $Kv|kv$ is not ruled. Then $v$ is a Gau{\ss} valuation satisfying 
	\[ v(aX^p+bX+c) = v(aX^p) = vc < v(bX). \]
\end{Theorem}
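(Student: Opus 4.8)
The plan is to combine Proposition \ref{Prop non ruled necessary condns} and Proposition \ref{Prop p divides e} to reduce the theorem to a single numerical assertion, and then to establish that assertion by the familiar mechanism of forcing a ruled residue field whenever it fails. First, Proposition \ref{Prop non ruled necessary condns}(i) gives $[Kv:k(X)v]=p$, so Proposition \ref{Prop p divides e} applies and yields $v(aX^p+bX+c)=\min\{v(a'_0Y^p),vc'\}$, strictly below every remaining monomial value, together with the implication that $v$ is Gau{\ss} as soon as $v(aX^p+bX+c)=v(a'_0Y^p)$. I claim it suffices to prove the \emph{equality} $v(a'_0Y^p)=vc'$: in that case the minimum is attained at $v(a'_0Y^p)$, so $v$ is Gau{\ss}, whence $\alpha=0$, $a'_0Y^p=aX^p$, $c'=c$, and the strict inequalities of Proposition \ref{Prop p divides e} collapse to exactly $v(aX^p+bX+c)=v(aX^p)=vc<v(bX)$. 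Thus the entire content of the theorem is the exclusion of the two strict alternatives $v(a'_0Y^p)<vc'$ and $vc'<v(a'_0Y^p)$.

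To exclude each alternative I would show that it forces $Kv|kv$ to be ruled. In either case the value $v(aX^p+bX+c)$ is attained by a single term of $aX^p+bX+c$ — by $a'_0Y^p=aX^p$ in the first case (which, by Proposition \ref{Prop p divides e}, is already Gau{\ss}) and by the constant $c'$ in the second — so the quotient of $aX^p+bX+c$ by that term has residue $1$, and passing to henselizations (where $p\neq\ch kv$ splits off the simple root $T=1$ of $T^p-1$) gives $K^h=k(X)^h(\sqrt[p]{aX^p})=k(X)^h(\sqrt[p]{a})$ in the first case and $K(\alpha)^h=k(\alpha,X)^h(\sqrt[p]{c'})$ in the second. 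The decisive difference from Theorem \ref{Thm e and p coprime} and Proposition \ref{Prop residue field non ruled min pair} is that, because $p\mid e$, one has $\sqrt[p]{(aX^p+bX+c)^e}=(aX^p+bX+c)^{e/p}\in k(X)$, so the device of passing to $(aX^p+bX+c)^e$ produces only the trivial extension and is useless; instead the radicand must be normalised by an honest $p$-th power. Writing $\tfrac1p$ of the relevant value ($va$, resp.\ $vc'$) as $\mu+m\gamma$ with $\mu\in vk(\alpha)$ and $0\le m<e$, the membership of $p$ times it in $vk(\alpha)$ forces $e\mid pm$, hence $n\mid m$ with $n:=e/p$; writing $m=jn$, $0\le j\le p-1$, I would split according to whether $j=0$ or $j\neq 0$.

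When $j\neq 0$ the normalised radicand has residue of the shape $\lambda Z^{-j}$ with $\lambda\in k(\alpha)v^{\times}$ and $\gcd(j,p)=1$, so $K(\alpha)v=k(\alpha)v(Z)(\sqrt[p]{\lambda Z^{-j}})$; the relation $W^pZ^{j}=\lambda$ defines a rational (toric) curve, and choosing integers with $ps-jt=1$ exhibits a single generator $T$ with $Z=\lambda^{-t}T^{p}$ and $W=\lambda^{s}T^{-j}$, so $K(\alpha)v=k(\alpha)v(T)$ is simple transcendental. Since $k(\alpha)v\subseteq Kv\subseteq K(\alpha)v$ by Lemma \ref{Lemma (K.F)v:Fv=p}, Lüroth's theorem makes $Kv$ rational over the finite extension $k(\alpha)v$ of $kv$, i.e.\ ruled. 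When $j=0$ the normaliser may be taken in $k(\alpha)$ itself, and by Lemma \ref{Lemma Fv poly over kv} the residue of the normalised radicand is a \emph{constant} $\delta\in k(\alpha)v$; here I invoke two structural facts valid under the non-ruled hypothesis, namely $k(\alpha)v\subseteq k(X)v$ (from Proposition \ref{Prop non ruled necessary condns}(ii) and $k(\alpha)v\subseteq Kv$), giving $k(X)v=k(\alpha)v(Z')$ by Lüroth, and the fact that a $k$-normaliser $u\in k(X)$ of the same value differs from the $k(\alpha)$-normaliser by a factor with residue in $k(\alpha)v$, by [\ref{Dutta min fields implicit const fields}, Lemma 6.1]. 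Together these give $Kv=k(X)v(\sqrt[p]{\delta})=k(\alpha)v(\sqrt[p]{\delta})(Z')$, again ruled. The first alternative is treated identically with $k(\alpha)=k$, $k(X)v=kv(Z)$, and $a$ in place of $c'$.

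The main obstacle is precisely the $j=0$ constant branch: a priori one sees only that $K(\alpha)v$ is purely transcendental over the possibly-proper finite extension $k(\alpha)v(\sqrt[p]{\delta})$ of $k(\alpha)v$, and an intermediate field such as $Kv$ need not inherit rationality over $k(\alpha)v$ — a genus-zero conic without a rational point would survive. What rescues the argument, and is the heart of the $p\mid e$ case, is that the dominant term is a \emph{single monomial}, so after the $p$-th-power normalisation the radicand residue is a genuine constant rather than a nonconstant polynomial in $Z$; the conic phenomenon cannot arise, and the descent to $Kv=k(\alpha)v(\sqrt[p]{\delta})(Z')$ goes through. Verifying that the normalisation really produces a constant, and controlling the discrepancy between the $k$- and $k(\alpha)$-normalisers via [\ref{Dutta min fields implicit const fields}, Lemma 6.1], is where the bulk of the careful computation lies.
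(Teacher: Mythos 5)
Your proposal is correct, and its skeleton---invoke Proposition \ref{Prop non ruled necessary condns}(i) so that Proposition \ref{Prop p divides e} applies, then rule out the two strict alternatives $v(a_0'Y^p)<vc'$ and $vc'<v(a_0'Y^p)$ by showing each forces $Kv|kv$ to be ruled---matches the paper's. Your branch where $\tfrac1p$ of the dominant value lies in $vk(\alpha)$ (your $j=0$) is essentially the paper's first sub-case verbatim: normalise by $f(X)\in k[X]$ with $vf=vt$, compare with the $k(\alpha)$-normaliser via [\ref{Dutta min fields implicit const fields}, Lemma 6.1], and read off $Kv=k(\alpha)v(\sqrt[p]{\delta})(Z')$. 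The genuine divergence is in the other branch. The paper adjoins $\sqrt[p]{c'}$ (resp.\ $\sqrt[p]{a}$) to the constant field: since its value is not in $vk(\alpha)$ this extension is totally ramified, the residue field does not grow, Lemma \ref{Lemma (K.F)v:Fv=p} forces $[K(\sqrt[p]{c'},\alpha)v:k(\sqrt[p]{c'},\alpha,X)v]=p$, and yet the rescaled radicand has residue $1v$, so Proposition \ref{Prop Kv generated by residue} produces only a $p$-th root of unity, already in $kv$---a contradiction; the primitive $p$-th root of unity is used again here. You instead keep the constant field fixed, normalise by $(uY^{jn})^p$, obtain the radicand residue $\lambda Z^{-j}$ with $\gcd(j,p)=1$, and observe that $k(\alpha)v(Z,W)$ with $W^pZ^j=\lambda$ equals $k(\alpha)v(T)$ for $T=Z^sW^t$ with $ps-jt=1$ (indeed $Z=\lambda^{-t}T^p$, $W=\lambda^sT^{-j}$), so $K(\alpha)v$ is simple transcendental over $k(\alpha)v$ and L\"uroth makes $Kv$ ruled. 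Both arguments are sound; yours is more self-contained, replacing the ground-field ramification argument by an explicit unimodular parametrisation of the residue curve, and it does not reuse the root-of-unity hypothesis at this step. In a full write-up you would only need to add a line justifying that $\tfrac1p vc'\in vk(\alpha)+\ZZ\gamma$ (from $vK=vk(X)\subseteq vk(\alpha,X)$, so the decomposition $\mu+m\gamma$ exists) and that $k(\alpha)v\subseteq Kv$ (from [\ref{APZ characterization of residual trans extns}, Theorem 2.1]) so that L\"uroth applies.
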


\begin{proof}
	Suppose that $v$ is not a Gau{\ss} valuation, that is, $vX=v\a<\g=vY$. It follows from Proposition \ref{Prop p divides e} that
	\begin{equation}\label{eqn 4}
		v(aX^p+bX+c) =vc^\prime < \min\{ v(a^\prime_i Y^{p-i}), v(b^\prime Y) \}.
	\end{equation}
	As a consequence, we obtain that
	\begin{equation}\label{eqn 5}
		\dfrac{aX^p+bX+c}{c^\prime}v = 1v.
	\end{equation}

    \pars Suppose that $\dfrac{1}{p}vc^\prime\in vk(\a)$. Take $t\in k(\a)$ such that $vt = -\dfrac{1}{p} vc^\prime$. By [\ref{APZ characterization of residual trans extns}, Theorem 2.1] we can take $f(X)\in k[X]$ with $\deg_X f < [k(\a):k]$ such that $vf=vt$. It follows from [\ref{Dutta min fields implicit const fields}, Lemma 6.1] that $\dfrac{f}{t}v\in k(\a)v$. Hence, 
    \[ k(\a)v \, (\sqrt[p]{f^p c^\prime v}) = k(\a)v \, (\sqrt[p]{t^p c^\prime v}).  \]
    Observe that $K = k(X)(\sqrt[p]{f^p (aX^p+bX+c)})$. We have $[Kv:k(X)v]=p$ by Proposition \ref{Prop non ruled necessary condns}. It now follows from Proposition \ref{Prop Kv generated by residue} that $Kv = k(X)v \, (\sqrt[p]{f^p(aX^p+bX+c)}v)$. Expanding $aX^p+bX+c$ in terms of $Y$, we obtain from (\ref{eqn 4}) that $f^p(aX^p+bX+c)v = f^pc^\prime v$. Moreover, it follows from [\ref{APZ characterization of residual trans extns}, Corollary 2.3] that $k(X)v = k(\a)v \, (Z^\prime)$   where $Z^\prime$ is transcendental over $k(a)v$. Thus,
    \begin{align*}
    	Kv&= k(\a)v \, (Z^\prime, \sqrt[p]{f^p (aX^p+bX+c)v}) = k(\a)v \, (Z^\prime, \sqrt[p]{f^p c^\prime v})\\
    	&= k(\a)v \, (\sqrt[p]{t^p c^\prime v})(Z^\prime).
    \end{align*}
  However, this contradicts the non-ruled hypothesis of $Kv|kv$. It follows that 
    \[ \dfrac{1}{p} vc^\prime\notin vk(\a). \]

\pars Consider the valued field extension $(k(\sqrt[p]{c^\prime}, \a)| k(\a),v)$. Observe that $k(\sqrt[p]{c^\prime},\a)|k(\a)$ is a Galois extension by [\ref{Lang}, Theorem 6.2(ii)]. Moreover, $v\sqrt[p]{c^\prime}\notin vk(\a)$ by our prior findings. It now follows from [\ref{ZS2}, Ch. VI, \S 12, Corollary to Theorem 25] that $(vk(\sqrt[p]{c^\prime},\a) : vk(\a)) =p$ and hence 
\[ k(\sqrt[p]{c^\prime},\a)v = k(\a)v. \]
It follows from Lemma \ref{Lemma (K.F)v:Fv=p} that
\begin{equation}\label{eqn 6}
	[K(\sqrt[p]{c^\prime},\a)v: k(\sqrt[p]{c^\prime},\a,X)v] = [Kv:k(X)v]=p.
\end{equation}
The fact that $\a\in k(\sqrt[p]{c^\prime},\a)$ implies that $(\a,\g)$ is also a minimal pair for $( k(\sqrt[p]{c^\prime},\a,X)|k(\sqrt[p]{c^\prime},\a),v)$. It follows from [\ref{APZ characterization of residual trans extns}, Corollary 2.3] that 
\[ k(\sqrt[p]{c^\prime},\a,X)v = k(\sqrt[p]{c^\prime},\a)v\, (Z^{\prime\prime}) = k(\a)v\, (Z^{\prime\prime}), \]
where $Z^{\prime\prime}$ is transcendental over $k(\a)v$. Observe that $K(\sqrt[p]{c^\prime},\a) = k(\sqrt[p]{c^\prime},\a,X)(\sqrt[p]{aX^p+bX+c}) = k(\sqrt[p]{c^\prime},\a,X) \mathlarger{\mathlarger{\mathlarger{(}}}\sqrt[p]{\dfrac{aX^p+bX+c}{c^\prime}} \mathlarger{\mathlarger{\mathlarger{)}}} $. It now follows from Proposition \ref{Prop Kv generated by residue} that 
\[ K(\sqrt[p]{c^\prime},\a)v = k(\sqrt[p]{c^\prime},\a,X)v  \mathlarger{\mathlarger{\mathlarger{(}}}\sqrt[p]{\dfrac{aX^p+bX+c}{c^\prime}}v \mathlarger{\mathlarger{\mathlarger{)}}} . \] 
In light of (\ref{eqn 5}), we obtain that $K(\sqrt[p]{c^\prime},\a)v = k(\sqrt[p]{c^\prime},\a,X)v \, (\zeta)$, where $\zeta $ is a $p$-th root of $1v$. However, the assumption that $k$ contains a primitive $p$-th root of unity implies that $\zeta \in kv$ and hence $K(\sqrt[p]{c^\prime},\a)v = k(\sqrt[p]{c^\prime},\a,X)v$, which contradicts (\ref{eqn 6}). Thus $v$ is a Gau{\ss} valuation. It then follows from Proposition \ref{Prop p divides e} that
\[ v(aX^p+bX+c) = \min\{ v(aX^p), vc \} < v(bX).  \]

\pars Suppose that $vc < v(aX^p)$. Then $v(aX^p+bX+c) = vc < \min\{ v(aX^p),v(bX) \}$. Replacing $c^\prime$ by $c$, $\a$ by $0$ and repeating the same arguments used in the proof so far (with suitable simplifications) then lead us to a contradiction. As a consequence, $v(aX^p) \leq vc$.

\pars Suppose now that $v(aX^p+bX+c) = v(aX^p) < vc$. We have $v(aX^p) < v(bX)$ from Proposition \ref{Prop p divides e}. As a consequence, 
\begin{equation}\label{eqn 7}
	\dfrac{aX^p+bX+c}{aX^p}v = 1v.
\end{equation}
Consider the valued field extension $(k(\sqrt[p]{a})|k,v)$. Suppose that $k(\sqrt[p]{a})v = kv$. It then follows from Lemma \ref{Lemma (K.F)v:Fv=p} that
\[ [K(\sqrt[p]{a})v:k(\sqrt[p]{a},X)v] = p.  \]
Observe that $K(\sqrt[p]{a}) = k(\sqrt[p]{a},X)(\sqrt[p]{aX^p+bX+c}) = k(\sqrt[p]{a},X)\mathlarger{\mathlarger{\mathlarger{(}}}  \sqrt[p]{ \dfrac{aX^p+bX+c}{aX^p} } \mathlarger{\mathlarger{\mathlarger{)}}}$. It now follows from Proposition \ref{Prop Kv generated by residue} that $K(\sqrt[p]{a})v = k(\sqrt[p]{a},X)v \, \mathlarger{\mathlarger{\mathlarger{(}}} \sqrt[p]{ \dfrac{aX^p+bX+c}{aX^p} v } \mathlarger{\mathlarger{\mathlarger{)}}} $. In light of (\ref{eqn 7}) we conclude that $K(\sqrt[p]{a})v = k(\sqrt[p]{a},X)v\, (\zeta)$ where $\zeta$ is a $p$-th root of $1v$, whence it leads to a contradiction. Thus $kv \subsetneq k(\sqrt[p]{a})v$. As a consequence, we obtain that $\sqrt[p]{a}\notin k$ and hence $k(\sqrt[p]{a}) | k$ is a Galois extension of degree $p$. Moreover, 
\[ [k(\sqrt[p]{a})v : kv] = p.  \]
The fact that $p\neq \ch kv$ then implies that $k(\sqrt[p]{a})v|kv$ is a separable and hence primitive extension. Observe that $kv$ is relatively algebraically closed in $k(X)v$ by [\ref{APZ characterization of residual trans extns}, Theorem 2.1]. By Proposition \ref{Prop non ruled necessary condns}, $kv$ is also relatively algebraically closed in $Kv$. It now follows from Lemma \ref{Lemma K rel alg closed in L} that 
\[ p= [k(\sqrt[p]{a})v : kv] = [k(\sqrt[p]{a})v. k(X)v : k(X)v] = [k(\sqrt[p]{a})v. Kv : Kv].   \]
Since $p\neq\ch kv$ and $[k(\sqrt[p]{a})v : kv] = p$, we obtain from [\ref{Dutta Kuh abhyankars lemma}, Theorem 3(2)] that $k(\sqrt[p]{a})v. k(X)v = k(\sqrt[p]{a},X)v$ and $k(\sqrt[p]{a})v. Kv = K(\sqrt[p]{a})v$. The above relations can be thus rewritten as 
\[  p= [k(\sqrt[p]{a})v : kv] = [k(\sqrt[p]{a},X)v: k(X)v] = [K(\sqrt[p]{a})v : Kv]. \]
Moreover, observe that $K(\sqrt[p]{a})v = k(\sqrt[p]{a},X)v.Kv$. We have thus obtained that $Kv$ and $k(\sqrt[p]{a},X)v$ are linearly disjoint over $k(X)v$. It follows that
\begin{equation}\label{eqn 8}
	[K(\sqrt[p]{a})v : k(\sqrt[p]{a},X)v] = [Kv:k(X)v] = p.
\end{equation}
Observe that $K(\sqrt[p]{a}) = k(\sqrt[p]{a},X) (\sqrt[p]{aX^p+bX+c}) = k(\sqrt[p]{a},X) \mathlarger{\mathlarger{\mathlarger{(}}}  \sqrt[p]{ \dfrac{aX^p+bX+c}{aX^p} } \mathlarger{\mathlarger{\mathlarger{)}}}$. It then follows from Proposition \ref{Prop Kv generated by residue} that $K(\sqrt[p]{a}) v = k(\sqrt[p]{a},X)v \mathlarger{\mathlarger{\mathlarger{(}}}  \sqrt[p]{ \dfrac{aX^p+bX+c}{aX^p}v } \mathlarger{\mathlarger{\mathlarger{)}}}$. But this contradicts (\ref{eqn 8}) in light of (\ref{eqn 7}). We have now proved the theorem.
\end{proof}

%==============================================

\section{Proofs of main results}\label{Sect proof of main results}

\subsection{Proof of Theorem \ref{Thm non ruled necessary}}

\begin{proof}
	The following cases are possible in light of Theorems \ref{Thm e and p coprime} and \ref{Thm p divides e}:
	\begin{align*}
		1) \, v(aX^p+bX+c) &= v(aX^p) = vc < v(bX),\\
		2) \, v(aX^p+bX+c) &= v(aX^p) = vc = v(bX),\\
		3) \, v(aX^p+bX+c) &= v(aX^p) = v(bX) < vc,\\
		4) \, v(aX^p+bX+c) &> v(aX^p) = v(bX) \leq vc.
	\end{align*}
We further observe that $v$ is not a Gau{\ss} valuation only in Case 4). In Case 1), the condition $v(aX^p) = vc$ implies that $vX = \dfrac{1}{p} v(\dfrac{c}{a})$. The relation $v(aX^p) < v(bX)$ then implies that $\dfrac{1}{p-1} v(\dfrac{b}{a}) > vX = \dfrac{1}{p} v(\dfrac{c}{a})$. We similarly analyse cases 2) and 3). In Case 4), the condition $v(aX^p) = v(bX)$ implies that $vX = \dfrac{1}{p-1} v(\dfrac{b}{a})$. Finally, the relation $v(aX^p) \leq vc$ leads us to $\dfrac{1}{p-1} v(\dfrac{b}{a}) = vX \leq \dfrac{1}{p} v(\dfrac{c}{a})$. The theorem now follows.
\end{proof}

%=========================================================================

\subsection{Proof of Proposition \ref{Prop characterise non-ruled}}

%The following theorem, often referred to as Fermat's Last Theorem for polynomials, will play a central role in the sequel (see [\ref{13 lectures FLT}, Lecture XIII, \S 1] for a proof):
%
%\begin{Theorem}\label{Thm FLT pols}
%	Let $k$ be a field and $n\geq 3$ be an integer such that $\ch k$ does not divide $n$. Assume that $f,g,h \in k[X]$ are non-zero polynomials such that they do not share any common factor, and that $f^n+g^n=h^n$. Then $f,g$ and $h$ are constant polynomials.  
%\end{Theorem}

\begin{Lemma}\label{Lemma non ruled char 0}
	Let $k$ be a field. Take $X$ transcendental over $k$ and an odd prime $p$. Assume that $\ch k \neq p$ and $k$ contains a primitive $p$-th root of unity. Set $K:= k(X)(\sqrt[p]{a_nX^n+a_{n+1}X^{n+1}})$ where $1\leq n\leq p-2$ and $a_n,a_{n+1}\in k\setminus\{0\}$. Then $K|k$ is not a ruled extension.  
\end{Lemma}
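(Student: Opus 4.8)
The plan is to show that the smooth projective model of $K$ over $\overline{k}$ has positive genus, which is incompatible with ruledness. Recall that if $K|k$ is ruled, say $K = k'(t)$ with $[k':k] < \infty$ and $t$ transcendental over $k'$, then since $k'$ is algebraic over $k$ we may take $k' \subseteq \overline{k}$, whence $\overline{k}.K = \overline{k}(t)$ is a rational function field of genus zero over $\overline{k}$. It therefore suffices to prove that the function field $F := \overline{k}.K = \overline{k}(X)(\sqrt[p]{f})$, where $f := a_nX^n + a_{n+1}X^{n+1} = X^n(a_n + a_{n+1}X)$, has genus $(p-1)/2 > 0$ over $\overline{k}$. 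Since $p \neq \ch k$ and $\overline{k}$ is perfect with $\overline{k}$ its own field of constants, the passage to $\overline{k}$ introduces no inseparability or wild-ramification subtleties, so this genus computation is unambiguous.

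First I would record that $F|\overline{k}(X)$ is a Kummer extension of degree exactly $p$. Since the zero of $f$ at $X = 0$ has order $v_0(f) = n$ with $1 \leq n \leq p-2$, this order is coprime to $p$, so $f$ is not a $p$-th power in $\overline{k}(X)$; as $k$ (hence $\overline{k}$) contains a primitive $p$-th root of unity, the polynomial $T^p - f$ is irreducible over $\overline{k}(X)$ and $[F:\overline{k}(X)] = p$.

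Next I would determine the ramification of $F|\overline{k}(X)$. Because $\overline{k}$ is algebraically closed, contains the $p$-th roots of unity, and $p \neq \ch k$, the extension is tame, and a place of $\overline{k}(X)$ ramifies exactly when the corresponding valuation of $f$ is not divisible by $p$; at each such place the extension is totally (tamely) ramified with different exponent $p-1$. The candidate places carry $v_0(f) = n$, then $v_r(f) = 1$ at $r := -a_n/a_{n+1}$ (note $r \neq 0$ as $a_n \neq 0$, so the three places below are distinct), and $v_\infty(f) = -(n+1)$ at infinity. Now $\gcd(n,p) = \gcd(1,p) = 1$, and the bound $n \leq p-2$ gives $2 \leq n+1 \leq p-1$, hence $\gcd(n+1,p) = 1$; thus exactly the three distinct places $X=0$, $X=r$, $X=\infty$ ramify, each totally.

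Finally I would apply the Riemann--Hurwitz genus formula to the degree-$p$ cover $F|\overline{k}(X)$:
\[ 2g_F - 2 = p\,(2\cdot 0 - 2) + 3(p-1) = -2p + 3p - 3 = p-3, \]
so $g_F = (p-1)/2$. As $p$ is an odd prime this yields $g_F \geq 1 > 0$, contradicting the genus-zero conclusion forced by ruledness; hence $K|k$ is not ruled. The main points requiring care are the behavior at infinity, where total ramification relies precisely on $\gcd(n+1,p)=1$ and thus on the hypothesis $n \leq p-2$, and the reduction of ruledness to the vanishing of the geometric genus, which is clean here because tameness ($p \neq \ch k$) guarantees the genus is preserved upon extending constants to $\overline{k}$.
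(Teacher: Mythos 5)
Your proof is correct and takes essentially the same route as the paper: both arguments reduce ruledness to a genus-zero condition and then exhibit genus $\tfrac{p-1}{2}>0$ for this Kummer cover. The only difference is packaging — the paper stays over $k$, citing Stichtenoth's Proposition 6.3.1 both for the genus and for the fact that $\overline{k}\sect K=k$ (so that ruled is equivalent to rational), whereas you base-change to $\overline{k}$ and run Riemann--Hurwitz explicitly, which cleanly sidesteps the constant-field issue.
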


\begin{proof}
It follows from [\ref{Stichtenoth book}, Proposition 6.3.1] that $\overline{k}\sect K = k$. Thus $K|k$ is a ruled extension if and only if it is a rational function field over $k$. On the other hand, it follows from [\ref{Stichtenoth book}, Proposition 1.6.3] that a necessary condition for $K|k$ to be a rational extension is that $K|k$ has genus zero. Since $K|k$ has genus $\dfrac{p-1}{2}$ by [\ref{Stichtenoth book}, Proposition 6.3.1], we have the lemma.

\end{proof}

\pars We can now give a \emph{proof of Proposition \ref{Prop characterise non-ruled}}:

\begin{proof}
	The conditions that $v$ is a Gau{\ss} valuation with $vX = \dfrac{1}{p} v(\dfrac{c}{a}) < \dfrac{1}{p-1}v(\dfrac{b}{a})$ imply that
	\begin{equation}\label{eqn 17}
		v(aX^p+bX+c) = v(aX^p) = vc < v(bX).
	\end{equation}
    In particular, 
    \[ \dfrac{1}{p} vc = v(\sqrt[p]{aX^p+bX+c}) \in vK.  \]
    Observe that
    \begin{equation}\label{eqn 18}
    	vk(X) = vk + \ZZ\dfrac{1}{p} v(\dfrac{c}{a}).
    \end{equation}
	\pars We first assume that $Kv|kv$ is not a ruled extension. Then $vK = vk(X)$ and hence $\dfrac{1}{p} vc \in vk(X) $ by part (i) of Proposition \ref{Prop non ruled necessary condns} and the Fundamental Inequality. As a consequence, we obtain from (\ref{eqn 18}) that
	\begin{equation}\label{eqn 19}
		vk + \ZZ\dfrac{1}{p} vc \subseteq vk + \ZZ\dfrac{1}{p} v(\dfrac{c}{a}).
	\end{equation}
	Observe that $(vk + \ZZ\dfrac{1}{p} v(\dfrac{c}{a}) : vk)$ is either $1$ or $p$. The same also holds for $(vk + \ZZ\dfrac{1}{p} vc : vk)$. It follows that the containment in (\ref{eqn 19}) is strict only when $\dfrac{1}{p} vc\in vk$ and $\dfrac{1}{p} v(\dfrac{c}{a})\notin vk$. Assuming strict containment, we have $k(X)v = kv\, (\dfrac{aX^p}{c}v)$ by [\ref{APZ characterization of residual trans extns}, Corollary 2.3]. Set $Z:= \dfrac{aX^p}{c}v$. Take $t\in k$ such that $vt = \dfrac{1}{p} vc$ and write $K = k(X) \mathlarger{\mathlarger{\mathlarger{(}}}  \sqrt[p]{ \dfrac{aX^p+bX+c}{t^p} } \mathlarger{\mathlarger{\mathlarger{)}}}$. Observe that
	\[ \dfrac{aX^p+bX+c}{t^p}v = \dfrac{aX^p}{t^p}v+\dfrac{c}{t^p}v = \b Z+\b, \]
	where $\b:= \dfrac{c}{t^p}v \in kv\setminus\{0\}$. We thus have a chain 
	\[ k(X)v = kv\, (Z)\subsetneq kv\, (Z,\sqrt[p]{\b Z+\b})\subseteq Kv. \]
	The condition $[Kv:k(X)v]=p$ then implies that $Kv = kv\, (Z,\sqrt[p]{\b Z+\b}) = kv\, (\sqrt[p]{\b Z+\b})$ which contradicts the non-ruled assumption. We have thus shown that the containment in (\ref{eqn 19}) is an equality, that is, $(i)$ holds.
	\newline Now assume that $\dfrac{1}{p} v(\dfrac{c}{a}) \notin vk$. Then $(ii)$ follows directly whenever $\dfrac{1}{p} vc \in vk$. Suppose that $\dfrac{1}{p} v(\dfrac{c}{a}), \dfrac{1}{p} vc \notin vk$ but $\dfrac{1}{p} v(\dfrac{c}{a}) - \dfrac{1}{p} vc \in vk$. Then $\dfrac{1}{p}va = \dfrac{1}{p} vc - \dfrac{1}{p} v(\dfrac{c}{a}) \in vk$. Take $d\in k$ such that $vd = \dfrac{1}{p} va$ and write $K = k(X) \mathlarger{\mathlarger{\mathlarger{(}}}  \sqrt[p]{ \dfrac{aX^p+bX+c}{d^pX^p} } \mathlarger{\mathlarger{\mathlarger{)}}}$. Observe that
	\[ \dfrac{aX^p+bX+c}{d^pX^p}v = \b^\prime + \dfrac{\b^\prime}{Z}, \]
	where $\b^\prime:= \dfrac{a}{d^p}v\in kv\setminus \{0\}$. As a consequence we obtain that $Kv = kv\, (Z,\sqrt[p]{\b^\prime + \dfrac{\b^\prime}{Z}}) = kv\, (\sqrt[p]{\b^\prime + \dfrac{\b^\prime}{Z}})$ which again contradicts that $Kv|kv$ is non-ruled. Thus $\dfrac{1}{p} v(\dfrac{c}{a}) - \dfrac{1}{p} vc \notin vk$, that is, $(ii)$ holds.

	\pars Conversely, we assume that $(i)$ and $(ii)$ hold true. We first assume that $\dfrac{1}{p} v(\dfrac{c}{a}) \in vk$. Then $\dfrac{1}{p}vc$ is also contained in $vk$ by $(i)$. Take $t_1, d_1 \in k$ such that $vd_1 = -\dfrac{1}{p}v(\dfrac{c}{a})$ and $vt_1=\dfrac{1}{p}vc$. Then $k(X)v = kv \, (d_1 Xv)$ by [\ref{APZ characterization of residual trans extns}, Corollary 2.3]. Set $Z_1:= d_1Xv$. Write $K = k(X) \mathlarger{\mathlarger{\mathlarger{(}}}  \sqrt[p]{ \dfrac{aX^p+bX+c}{t_1^p} } \mathlarger{\mathlarger{\mathlarger{)}}} $. Observe that
	\[  \dfrac{aX^p+bX+c}{t_1^p}v = \d_1 Z_1^p + \d_2, \]
	where $\d_1:= \dfrac{a}{t_1^p d_1^p}v, \, \d_2:= \dfrac{c}{t_1^p}v \in kv\setminus \{0\}$. As a consequence, we obtain that
	\[ Kv = kv\,(Z_1, \sqrt[p]{\d_1 Z_1^p + \d_2}).  \]
	If $\sqrt[p]{\d_1 Z_1^p + \d_2} \in kv[Z_1]$, then $Kv = kv(Z_1)$ is a simple transcendental extension. Hence $Kv|kv$ has genus zero by [\ref{Stichtenoth book}, Proposition 1.6.3]. However, the genus of $Kv|kv$ is $\dfrac{(p-1)(p-2)}{2}$ by [\ref{Stichtenoth book}, Example 6.3.4]. It follows that $\sqrt[p]{\d_1 Z_1^p + \d_2} \notin kv[Z_1]$. Thus the irreducible decomposition of $\d_1 Z_1^p + \d_2$ in $kv[Z_1]$ is not of the form $g(Z_1)^p$ for some $g(Z_1) \in kv[Z_1]$. Employing [\ref{Stichtenoth book}, Proposition 6.3.1], we then obtain that $kv$ is relatively algebraically closed in $Kv$. Thus $Kv|kv$ is a ruled extension if and only if it is a simple transcendental extension. Since a necessary condition for the latter to hold is that $Kv|kv$ has genus zero [\ref{Stichtenoth book}, Proposition 1.6.3], we conclude that $Kv|kv$ is not a ruled extension.
	\pars We now assume that $\dfrac{1}{p} v(\dfrac{c}{a}) \notin vk$. Then $k(X)v = kv\, (Z)$ where $Z:= \dfrac{aX^p}{c}v$ by [\ref{APZ characterization of residual trans extns}, Corollary 2.3]. In light of $(i)$ we have an expression 
	\[  \dfrac{1}{p} vc = vr + \dfrac{m}{p} v(\dfrac{c}{a}) \text{ where } r\in k, \, 0\leq m \leq p-1.  \]
	Note that $m\neq 0$ by $(i)$ and $m\neq 1$ by $(ii)$, that is, 
	\begin{equation}\label{eqn 22}
		2\leq m\leq p-1.
	\end{equation}  
	Observe that $vc = v(rX^m)^p$. Write $\dfrac{c}{(rX^m)^p}v = \dfrac{\d}{Z^m}$ where $\d:= \dfrac{a^m}{r^p c^{m-1}}v \in kv\setminus\{0\}$. Write $K = k(X)  \mathlarger{\mathlarger{\mathlarger{(}}}  \sqrt[p]{ \dfrac{aX^p+bX+c}{(rX^m)^p} } \mathlarger{\mathlarger{\mathlarger{)}}}$. Observe that
	\[  \dfrac{aX^p+bX+c}{(rX^m)^p} v = \dfrac{aX^p}{(rX^m)^p}v + \dfrac{c}{(rX^m)^p}v = \dfrac{\d}{Z^m}(Z+1). \]
	As a consequence, we obtain that 
	\[  Kv = kv\,  \mathlarger{\mathlarger{\mathlarger{(}}}  Z, \sqrt[p]{\dfrac{\d}{Z^m}(Z+1)}  \mathlarger{\mathlarger{\mathlarger{)}}}  = kv\, (Z, \sqrt[p]{\d Z^{p-m+1} + \d Z^{p-m}}). \]
	It follows from (\ref{eqn 22}) that $1\leq p-m \leq p-2$. Then $Kv|kv$ is not ruled by Lemma \ref{Lemma non ruled char 0}.
\end{proof}

%=========================================================================

\subsection{Proof of Proposition \ref{Prop non-ruled non-Gauss}}

\begin{Lemma}\label{Lemma non-ruled non-Gauss}
	Assume that $k$ is separable-algebraically closed, $\ch k = \ch kv$ and $\dfrac{1}{p-1} v(\dfrac{b}{a}) \leq \dfrac{1}{p} v(\dfrac{c}{a})$. Further, assume that $Kv|kv$ is not ruled and $v$ is not a Gau{\ss} valuation. Take a minimal pair of definition $(\a,\g)$. Set $Y:= X-\a$ and write 
	\[  aX^p+bX+c = \sum_{i=0}^{p-2} a^\prime_i Y^{p-i} + b^\prime Y + c^\prime. \]
	Set 
	\[  i_0:= \max \mathlarger{\mathlarger{\mathlarger{\{}}} i \mathlarger{\mathlarger{\mathlarger{\mid}}}  1\leq i\leq p-2 \text{ and }  \binom{p}{i}\neq 0 \mathlarger{\mathlarger{\mathlarger{\}}}}.  \]
	Then,
	\[  \g = \min \mathlarger{\mathlarger{\mathlarger{\{}}} \dfrac{1}{p-i_0-1} v \mathlarger{\mathlarger{\mathlarger{ ( }}}  \dfrac{b^\prime \a^{p-i_0-1}}{b} \mathlarger{\mathlarger{\mathlarger{ ) }}} , \dfrac{1}{p-i_0} v \mathlarger{\mathlarger{\mathlarger{ ( }}}  \dfrac{c^\prime \a^{p-i_0-1}}{b} \mathlarger{\mathlarger{\mathlarger{ ) }}}   \mathlarger{\mathlarger{\mathlarger{\}}}}.     \]
	Moreover, we have
	\[ \dfrac{a\a^{p-1}}{b} v = - \dfrac{1}{p} v \text{ and } \dfrac{c}{b\a}v = \dfrac{1-p}{p}v.  \]
\end{Lemma}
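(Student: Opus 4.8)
The plan is to run the whole argument inside the regime already isolated by the earlier results. Since $v$ is not a Gau{\ss} valuation, Theorem \ref{Thm p divides e} forbids $p\mid e$, so $e$ and $p$ are coprime, and Theorem \ref{Thm e and p coprime}, whose last case applies precisely because $v$ is not Gau{\ss}, gives $v(aX^p+bX+c)>v(aX^p)=v(bX)\le vc$ together with $vX=v\alpha$ (the latter being forced by $vX<\gamma=v(X-\alpha)$). By Proposition \ref{Prop residue field non ruled min pair} I may replace $(k,K)$ by $(k(\alpha),K(\alpha))$ and so assume $\alpha\in k$. From $v(aX^p)=v(bX)$ I record $vb=va+(p-1)v\alpha$, so $\frac{a\alpha^{p-1}}{b}$ has value $0$; put $\rho:=\frac{a\alpha^{p-1}}{b}v$. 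Expanding in $Y=X-\alpha$ yields $a'_i=a\binom{p}{i}\alpha^i$, $b'=pa\alpha^{p-1}+b$ and $c'=a\alpha^p+b\alpha+c$; because $\ch k=\ch kv$ one has $v\binom{p}{i}=0$ whenever $\binom{p}{i}\ne0$, so the surviving middle monomials are exactly those with $\binom{p}{i}\ne0$, and since $\gamma>v\alpha$ their values $va+iv\alpha+(p-i)\gamma$ decrease strictly in $i$. Hence the cheapest middle monomial is the one indexed by $i_0$, with $v(a'_{i_0}Y^{p-i_0})=va+i_0v\alpha+(p-i_0)\gamma$; note $i_0$ exists since $\binom{p}{1}=p\ne0$ in $kv$.

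The engine for everything is a dichotomy I would extract from the computations in the proofs of Theorems \ref{Thm e and p coprime} and \ref{Thm p divides e}: \emph{neither a single monomial, nor the pair formed solely by the linear and constant monomials of the $Y$-expansion, can realize $v(aX^p+bX+c)$.} Indeed, setting $Z:=dY^ev$ with $vd=-e\gamma$ and normalizing by a suitable $t$, only the powers $Y^{\lambda e}$ survive in the residue, as in Lemma \ref{Lemma Fv poly over kv}; if a single monomial dominated then $t^p(aX^p+bX+c)^ev$ would be a monomial $\beta Z^{j}$ with $0\le j\le p$ (and $\gcd(j,p)=1$ in the relevant cases, or $j\in\{0,p\}$), while a tie between only $b'Y$ and $c'$ would leave $\beta_1Z+\beta_2$. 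In every such shape $Kv=k(X)v(\sqrt[p]{\,\cdot\,})$ collapses to a rational function field over a finite extension of $kv$ (using $\zeta_p\in k$ and $p\ne\ch kv$), contradicting non-ruledness.

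I apply this first to the linear monomial. If $vb'=vb$, then $v(b'Y)=vb+\gamma$ lies strictly below every middle monomial (since $i_0\le p-2$ forces $p-i-1\ge1$) and below $v(aY^p)$, so either $b'Y$ or $c'$ would strictly dominate, which is impossible. Hence $vb'>vb$, i.e. $p\rho+1=0$, which is exactly $\frac{a\alpha^{p-1}}{b}v=-\frac1p$. As $\rho=-\frac1p\ne-1$, the case $vc>v(aX^p)$ is excluded, because it would force $\rho=-1$ through the forced vanishing of the residue of $\frac{aX^p+bX+c}{aX^p}$; therefore $vc=v(aX^p)$, and reading off the residue of $\frac{aX^p+bX+c}{bX}=\frac{aX^{p-1}}{b}+1+\frac{c}{bX}$, which must vanish, gives $\rho+1+\frac{c}{b\alpha}v=0$ and hence $\frac{c}{b\alpha}v=\frac{1-p}{p}$.

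Finally, the dichotomy forces $v(aX^p+bX+c)$ to be attained at the $i_0$-monomial \emph{together with} $\min\{v(b'Y),vc'\}$. Equating $v(a'_{i_0}Y^{p-i_0})$ with $v(b'Y)$ and with $vc'$ and substituting $vb=va+(p-1)v\alpha$ converts these into $(p-i_0-1)\gamma=v\frac{b'\alpha^{p-i_0-1}}{b}$ and $(p-i_0)\gamma=v\frac{c'\alpha^{p-i_0-1}}{b}$; the accompanying inequalities $v(a'_{i_0}Y^{p-i_0})\le v(b'Y),vc'$ give $\gamma\le$ each right-hand quotient, while the genuine tie yields equality in one of them, so $\gamma$ equals the minimum of the two, as claimed. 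The hard part will be the dichotomy step: one must bookkeep exactly which monomials survive modulo the maximal ideal and verify, in each degenerate shape, that adjoining a $p$-th root produces a rational function field; the rest is linear algebra in the value group.
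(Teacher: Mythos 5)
Your proposal follows essentially the same route as the paper: reduce to $\alpha\in k$, invoke the non-Gau{\ss} case of Theorem \ref{Thm e and p coprime} to get $v(aX^p+bX+c)>v(aX^p)=v(bX)\le vc$ and $vb=va+(p-1)v\alpha$, identify the $i_0$-th term as the unique cheapest middle monomial, use the residue computation (with $kv$ algebraically closed and $\zeta_p\in k$) to rule out both a single dominating monomial and domination by $\{b'Y,c'\}$ alone, and then solve the resulting tie for $\gamma$; the two residue identities come out of $vb'>vb$ and the vanishing of the residue of $\dfrac{aX^p+bX+c}{bX}$, which is the same computation the paper performs with $\dfrac{c'}{b\alpha}$. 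One assertion is wrong, though harmless: you claim $\rho=-\dfrac{1}{p}\neq -1$ in $kv$ and hence that $vc>v(aX^p)$ is excluded, but $-\dfrac{1}{p}=-1$ in $kv$ exactly when $p\equiv 1 \pmod{\ch kv}$, which the lemma's hypotheses do not rule out (the assumption that $\ch k$ does not divide $p-1$ appears only in Proposition \ref{Prop non-ruled non-Gauss}); indeed, when $p\equiv 1$ the identity $\dfrac{c}{b\alpha}v=\dfrac{1-p}{p}v=0$ forces $vc>v(b\alpha)$. Fortunately your final computation never uses $vc=v(aX^p)$: it only needs $vc\ge v(bX)$ and $v(aX^p+bX+c)>v(bX)$, both already in hand, so the conclusion $\rho+1+\dfrac{c}{b\alpha}v=0$ stands and the superfluous step should simply be deleted.
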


\begin{proof}
	The fact that $(k,v)$ is separable-algebraically closed implies that $vk$ is a divisible group [\ref{Engler book}, Theorem 3.2.11]. As a consequence, $\g\in vk$. Moreover, $\a\in k$ by [\ref{Dutta min fields implicit const fields}, Proposition 3.6]. It then follows from Proposition \ref{Prop non ruled necessary condns} (i) and the Fundamental Inequality that
	\[  vk = vk(X) = vK. \]
	Consequently, $\dfrac{1}{p} v(aX^p+bX+c) \in vk$. Take $t\in k$ such that $vt^p(aX^p+bX+c) = 0$. It then follows from Proposition \ref{Prop Kv generated by residue} that $Kv = k(X)v (\sqrt[p]{t^p(aX^p+bX+c)v})$.
	
	\pars Observe that $a^\prime_i = a \binom{p}{i}\a^i$ and hence $a^\prime_i \neq 0$ whenever $\binom{p}{i}\neq 0$. The fact that $\ch k =\ch kv$ implies that $v\binom{p}{i} =0$ whenever $\binom{p}{i} \neq 0$. As a consequence, 
	\[ v a^\prime_i = v(a\a^i) \text{ whenever } a^\prime_i \neq 0.  \]
	Take $j<i$ such that $a^\prime_j , a^\prime_i \neq 0$. Then
	\[  v(a^\prime_j Y^{p-j}) = va + jv\a + (p-j) \g > va + iv\a + (p-i)\g = v(a^\prime_i Y^{p-i}),  \]
	where the inequality follows from the fact that $v$ is not a Gau{\ss} valuation and hence $v\a< \g$. We have thus obtained that
	\[  \min\{ v(a^\prime_i Y^{p-i}) \} = v(a^\prime_{i_0} Y^{p-i_0}) < v(a^\prime_i Y^{p-i}) \text{ for all } i\neq i_0. \]
	It then follows from (\ref{eqn 2}) that 
	\begin{equation}\label{eqn 23}
		v(a^\prime_{i_0} Y^{p-i_0}) \leq \min \{ v(b^\prime Y), vc^\prime \}.
	\end{equation}
    $(\a,\g)$ being a minimal pair of definition implies that
	\[  v(aX^p+bX+c) = \min \{  v(a^\prime_i Y^{p-i}), v(b^\prime Y), vc^\prime  \} =  v(a^\prime_{i_0} Y^{p-i_0}).  \] 
	Suppose that $v(a^\prime_{i_0} Y^{p-i_0}) < \min \{ v(b^\prime Y), vc^\prime \}$. Then 
	\begin{align*}
		0 = vt^p (aX^p+bX+c) & = v t^p (a^\prime_{i_0} Y^{p-i_0}) < v t^p (a^\prime_i Y^{p-i}) \text{ for all } i\neq i_0, \\
		0 = vt^p (aX^p+bX+c) & = v t^p (a^\prime_{i_0} Y^{p-i_0}) < \min\{ v t^p (b^\prime Y), v t^p c^\prime \}.
	\end{align*}
	As a consequence, we obtain that
	\[  t^p (aX^p+bX+c)v = t^pa^\prime_{i_0} Y^{p-i_0} v.  \]
	Take $d\in k$ such that $vd=-\g$. It follows from [\ref{APZ characterization of residual trans extns}, Corollary 2.3] that $k(X)v = kv\, (Z)$ where $Z:= dYv$. Write
	\[  t^p (aX^p+bX+c)v = \b Z^{p-i_0}  \text{ where } \b:= \dfrac{t^p a^\prime_{i_0}}{d^{p-i_0}} \in kv\setminus\{0\}.  \]
	We have thus obtained the chain
	\[  k(X)v = kv\, (Z) \subsetneq kv\, (Z, \sqrt[p]{\b Z^{p-i_0}}) \subseteq Kv. \]
	Since $[Kv:k(X)v] = p$ by Proposition \ref{Prop non ruled necessary condns}, we conclude that $Kv = kv \, (Z, \sqrt[p]{\b Z^{p-i_0}})$. The fact that $k$ is separable-algebraically closed implies that $kv$ is algebraically closed and hence $\sqrt[p]{\b} \in kv$. As a consequence, 
	\[ Kv = kv \, (Z, \sqrt[p]{\b Z^{p-i_0}}) = kv \, (Z, \sqrt[p]{ Z^{p-i_0}}) = kv \, (\sqrt[p]{ Z}), \]
	where the last equality follows from the coprimality of $p$ and $p-i_0$. Since this again contradicts the non-ruled hypothesis, we conclude that the inequality in (\ref{eqn 23}) is actually an equality.

	\pars We first assume that $v(a^\prime_{i_0} Y^{p-i_0}) = v(b^\prime Y) \leq vc^\prime$. The fact that $v$ is not a Gau{\ss} valuation, coupled with Theorem \ref{Thm non ruled necessary} implies that $v\a = vX= \dfrac{1}{p-1} v(\dfrac{b}{a})$ and hence $v(a\a^{p-1}) = vb$. Then,
	\begin{align*}
		v(a^\prime_{i_0} Y^{p-i_0}) = v(b^\prime Y) &\Longrightarrow v(a\a^{i_0} Y^{p-i_0}  ) = v(b^\prime Y) \\
		&\Longrightarrow  v( a\a^{p-1} Y^{p-i_0-1} ) = v (b^\prime \a^{p-i_0-1}) \Longrightarrow  v( b Y^{p-i_0-1} ) = v (b^\prime \a^{p-i_0-1})   \\
		&\Longrightarrow \g = vY = \dfrac{1}{p-i_0-1} v \mathlarger{\mathlarger{\mathlarger{ ( }}} \dfrac{b^\prime \a^{p-i_0-1}}{b} \mathlarger{\mathlarger{\mathlarger{ ) }}}  .
	\end{align*} 
	Similarly, the condition $v(a^\prime_{i_0} Y^{p-i_0}) \leq vc^\prime$ implies that $\g\leq \dfrac{1}{p-i_0} v \mathlarger{\mathlarger{\mathlarger{ ( }}} \dfrac{c^\prime \a^{p-i_0-1}}{b} \mathlarger{\mathlarger{\mathlarger{ ) }}} $. The case when $v(a^\prime_{i_0} Y^{p-i_0}) = vc^\prime \leq v(b^\prime Y)$ is analysed similarly. 
	
	\pars The fact that $v\a < \g$ implies that $v\a^{p-i_0-1} < (p-i_0-1)\g \leq v \mathlarger{\mathlarger{\mathlarger{ ( }}} \dfrac{b^\prime \a^{p-i_0-1}}{b} \mathlarger{\mathlarger{\mathlarger{ ) }}} $. Consequently, we obtain that $v\dfrac{b^\prime}{b} > 0$, that is, $\dfrac{b^\prime}{b}v = 0v$. Observe that $b^\prime = ap\a^{p-1}+b$. It follows that $\dfrac{ap\a^{p-1}}{b}v = -1v$ and hence $\dfrac{a\a^{p-1}}{b}v = -\dfrac{1}{p}v$. Similarly, the condition $v\a < \g \leq \dfrac{1}{p-i_0} v \mathlarger{\mathlarger{\mathlarger{ ( }}} \dfrac{c^\prime \a^{p-i_0-1}}{b} \mathlarger{\mathlarger{\mathlarger{ ) }}} $ implies that $v(b\a) < vc^\prime = v(a\a^p+b\a+c)$. It follows that $\dfrac{a\a^p+c}{b\a}v = -1v$. Utilising the relation $\dfrac{a\a^{p-1}}{b}v = -\dfrac{1}{p}v$, we conclude that $\dfrac{c}{b\a}v = \dfrac{1-p}{p}v$.  
\end{proof}

\begin{Corollary}\label{Coro non-ruled non-Gauss}
	Assume that $k$ is separable-algebraically closed, $\ch k = \ch kv$ and $\dfrac{1}{p-1} v(\dfrac{b}{a}) \leq \dfrac{1}{p} v(\dfrac{c}{a})$. Then for every $\a\in k$, there exists at most one $\g\in vk$ such that the extension with $(\a,\g)$ as a minimal pair of definition is not a Gau{\ss} valuation and the residue field extension $Kv|kv$ is not ruled. 
\end{Corollary}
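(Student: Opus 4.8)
The plan is to read off the Corollary almost immediately from Lemma \ref{Lemma non-ruled non-Gauss}, whose explicit formula for $\g$ is the crux of the matter. Fix $\a\in k$ and suppose there exists some $\g\in vk$ such that the valuation $v$ with minimal pair of definition $(\a,\g)$ is not a Gau{\ss} valuation and the residue field extension $Kv|kv$ is non-ruled. The three standing hypotheses of the Corollary (namely that $k$ is separable-algebraically closed, $\ch k=\ch kv$, and $\frac{1}{p-1}v(\frac{b}{a})\leq\frac{1}{p}v(\frac{c}{a})$) together with the two supposed properties are precisely the hypotheses of Lemma \ref{Lemma non-ruled non-Gauss}. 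Applying that Lemma to the pair $(\a,\g)$ therefore yields
\[ \g = \min \left\{ \dfrac{1}{p-i_0-1}\, v\!\left( \dfrac{b^\prime \a^{p-i_0-1}}{b} \right),\ \dfrac{1}{p-i_0}\, v\!\left( \dfrac{c^\prime \a^{p-i_0-1}}{b} \right) \right\}. \]

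The key observation is that the right-hand side of this equation depends only on $\a$ and the fixed coefficients $a,b,c$, and in no way on $\g$ itself. First I would make this explicit by recalling the expansion of $aX^p+bX+c$ about $Y=X-\a$: one has $b^\prime = ap\a^{p-1}+b$ and $c^\prime = a\a^p+b\a+c$, each a fixed polynomial expression in $\a$, while $i_0=\max\{ i \mid 1\leq i\leq p-2,\ \binom{p}{i}\neq 0\}$ is determined by $p$ and $\ch k$ alone. Hence every ingredient on the right-hand side is a function of $\a$ only, so the displayed identity pins $\g$ down to a single value once $\a$ has been fixed. Since the datum $(\a,\g)$ completely determines the valuation on $k(X)$, this forces the valuation itself to be unique.

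It follows that for the given $\a$ there is at most one admissible value of $\g$, which is exactly the assertion of the Corollary. I do not anticipate any genuine obstacle remaining at this stage: the substantive work — producing the explicit formula for $\g$ and excluding every competing value through the degree and genus/ruledness obstructions — has already been carried out inside Lemma \ref{Lemma non-ruled non-Gauss}. The only point that warrants a line of care is the verification that the expansion coefficients $b^\prime$ and $c^\prime$ are formed around $\a$ and therefore carry no hidden dependence on $\g$; this is what makes the identity determine $\g$ outright rather than merely constrain it, and hence what upgrades Lemma \ref{Lemma non-ruled non-Gauss} into the uniqueness statement asserted here.
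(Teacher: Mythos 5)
Your proposal is correct and matches the paper's intent exactly: the Corollary is stated without a separate proof precisely because it is an immediate consequence of Lemma \ref{Lemma non-ruled non-Gauss}, whose displayed formula expresses $\g$ as a minimum of values of elements of $k$ built only from $\a$, the fixed coefficients $a,b,c$, and the integer $i_0$ (determined by $p$ and $\ch k$), and hence pins down $\g$ uniquely once $\a$ is fixed. Your one point of care — checking that $b^\prime$, $c^\prime$ and $i_0$ carry no hidden dependence on $\g$ — is indeed the only thing to verify, and it holds.
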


We can now give a \emph{proof of Proposition \ref{Prop non-ruled non-Gauss}}:

\begin{proof}
	If $v$ is a Gau{\ss} valuation such that the residue field extension is non-ruled, then $vX = \dfrac{1}{p-1} v(\dfrac{b}{a})$ by Theorem \ref{Thm non ruled necessary} and hence $v$ is uniquely determined. It is thus sufficient to show that there exists at most one extension of $v|_k$ from $k$ to $K$ which is not a Gau{\ss} valuation and the residue field extension is non-ruled. 
	
	\pars If the residue field extension is ruled for every extension of $v|_k$ from $k$ to $K$ which is not a Gau{\ss} valuation, then we are done. Hence we assume that there exists at least one extension admitting a non-ruled residue field extension which is also not a Gau{\ss} valuation. Suppose that there exist two such distinct extensions $w$ and $w_1$. Take the corresponding minimal pairs of definition $(\a,\g)$ and $(\a_1,\g_1)$. Without any loss of generality, we assume that
	\[ \g\leq \g_1.  \]
	It follows from Lemma \ref{Lemma non-ruled non-Gauss} that $\dfrac{c}{b\a} v = \dfrac{1-p}{p}v  = \dfrac{c}{b\a_1}v$. The assumption that $\ch k$ does not divide $p-1$ implies that $\dfrac{1-p}{p}v\neq 0$. Thus $\dfrac{b\a}{c}v = \dfrac{b\a_1}{c}v$. Consequently, $\dfrac{\a}{\a_1}v = 1v$ and hence 
	\begin{equation}\label{eqn 24}
		v(\a - \a_1) >  v\a = v\a_1.
	\end{equation} 
	Set $ \a^\prime:= \a_1 - \a $, $Y:= X-\a$ and $Y_1:= X-\a_1$. Write 
	\[  aX^p+bX+c = \sum_{i=0}^{p-2} a^\prime_i Y^{p-i} + b^\prime Y + c^\prime  =  \sum_{i=0}^{p-2} a^\prime_{i,1} Y_1^{p-i} + b_1^\prime Y_1 + c_1^\prime.  \] 
	Suppose that $v\a^\prime \geq \g$. Then $(\a_1,\g)$ is also a minimal pair of definition for $w$ by [\ref{AP sur une classe}, Proposition 3]. As a consequence, we obtain from Corollary \ref{Coro non-ruled non-Gauss} that $\g = \g_1$, that is, $w = w_1$. However, this contradicts the fact that $w$ and $w_1$ are distinct. Thus $v\a^\prime < \g$. We then obtain from Lemma \ref{Lemma non-ruled non-Gauss} that
	\begin{equation*}
		v\a^\prime < \g \leq   \dfrac{1}{p-i_0-1} v \mathlarger{\mathlarger{\mathlarger{ ( }}}  \dfrac{b^\prime \a^{p-i_0-1}}{b} \mathlarger{\mathlarger{\mathlarger{ ) }}}. 
	\end{equation*}
	As a consequence, 
	\begin{equation}\label{eqn 25}
		v (a\a^{i_0}(\a^\prime)^{p-i_0-1}) < v \mathlarger{\mathlarger{\mathlarger{ ( }}} \dfrac{a\a^{p-1} b^\prime }{b} \mathlarger{\mathlarger{\mathlarger{ ) }}} = vb^\prime,
	\end{equation}
	where the equality follows from the fact that $vb = v(aX^{p-1}) = v(a\a^{p-1})$. Observe that
	\begin{align*}
		b_1^\prime &= ap\a_1^{p-1}+b = ap(\a+\a^\prime)^{p-1}+b\\
		&= (ap\a^{p-1} + b) + \sum_{i=0}^{p-2} \binom{p-1}{i} ap\a^i(\a^\prime)^{p-1-i}\\
		&= b^\prime + \sum_{i=0}^{p-2}  \binom{p-1}{i} ap\a^i(\a^\prime)^{p-1-i}. 
	\end{align*}
	Assume that $\binom{p-1}{i}, \binom{p-1}{j} \neq 0$. It follows from (\ref{eqn 24}) that
	\[   v  \mathlarger{\mathlarger{\mathlarger{ ( }}} \binom{p-1}{i} ap\a^i(\a^\prime)^{p-1-i} \mathlarger{\mathlarger{\mathlarger{ ) }}} = v(a\a^i(\a^\prime)^{p-1-i}) > v(a\a^j(\a^\prime)^{p-1-j}) = v \mathlarger{\mathlarger{\mathlarger{ ( }}}  \binom{p-1}{j} ap\a^j(\a^\prime)^{p-1-j} \mathlarger{\mathlarger{\mathlarger{ ) }}}   \]
	whenever $i<j$. Employing the triangle inequality, we then obtain that
	\begin{equation}\label{eqn 26}
		v  \mathlarger{\mathlarger{\mathlarger{ ( }}}  \sum_{i=0}^{p-2}  \binom{p-1}{i} ap\a^i(\a^\prime)^{p-1-i} \mathlarger{\mathlarger{\mathlarger{ ) }}}  = v \mathlarger{\mathlarger{\mathlarger{ ( }}} \binom{p-1}{j} ap\a^j(\a^\prime)^{p-1-j} \mathlarger{\mathlarger{\mathlarger{ ) }}},
	\end{equation}
	where 
	\[  j:= \max \mathlarger{\mathlarger{\mathlarger{\{}}}  i  \mathlarger{\mathlarger{\mathlarger{\mid}}}  0\leq i\leq p-2 \text{ and } \binom{p-1}{i} \neq 0  \mathlarger{\mathlarger{\mathlarger{\}}}}. \]
	The binomial coefficients satisfy the relation $p \binom{p-1}{j} = (p-j) \binom{p}{j}$. The facts that $\binom{p-1}{j}\neq 0$ and $p\neq \ch k$ then imply that $\binom{p}{j}\neq 0$. As a consequence, $j\leq i_0$ where $i_0$ is defined as in Lemma \ref{Lemma non-ruled non-Gauss}. If $j <  i_0$, then $\binom{p-1}{i_0} = 0$ by definition of $j$. The relation $p \binom{p-1}{i_0} = (p-i_0) \binom{p}{i_0}$ then implies that $p-i_0=0$, that is, $p=i_0$. But this contradicts the fact that $i_0\leq p-2$. Thus $j=i_0$. It then follows from (\ref{eqn 25}) that
	\begin{equation}\label{eqn 27}
		v \mathlarger{\mathlarger{\mathlarger{ ( }}} \binom{p-1}{j} ap\a^j(\a^\prime)^{p-1-j} \mathlarger{\mathlarger{\mathlarger{ ) }}} =  v (a\a^{i_0}(\a^\prime)^{p-i_0-1}) < vb^\prime.
	\end{equation}
	Utilising (\ref{eqn 26}), (\ref{eqn 27}), the expansion of $b_1^\prime$, and employing the triangle inequality, we conclude that  
	\[  vb_1^\prime  = v (a\a^{i_0}(\a^\prime)^{p-i_0-1}) < vb^\prime. \]
	Similar arguments also yield that
	\[ vc_1^\prime < vc^\prime.  \]
	As a consequence, we obtain from Lemma \ref{Lemma non-ruled non-Gauss} that 
	\begin{align*}
		\g_1 =  &\min \mathlarger{\mathlarger{\mathlarger{\{}}} \dfrac{1}{p-i_0-1} v \mathlarger{\mathlarger{\mathlarger{ ( }}}  \dfrac{b_1^\prime \a_1^{p-i_0-1}}{b} \mathlarger{\mathlarger{\mathlarger{ ) }}} , \dfrac{1}{p-i_0} v \mathlarger{\mathlarger{\mathlarger{ ( }}}  \dfrac{c_1^\prime \a_1^{p-i_0-1}}{b} \mathlarger{\mathlarger{\mathlarger{ ) }}}   \mathlarger{\mathlarger{\mathlarger{\}}}}   \\
		\mathlarger{\mathlarger{  <  }} & \min \mathlarger{\mathlarger{\mathlarger{\{}}} \dfrac{1}{p-i_0-1} v \mathlarger{\mathlarger{\mathlarger{ ( }}}  \dfrac{b^\prime \a^{p-i_0-1}}{b} \mathlarger{\mathlarger{\mathlarger{ ) }}} , \dfrac{1}{p-i_0} v \mathlarger{\mathlarger{\mathlarger{ ( }}}  \dfrac{c^\prime \a^{p-i_0-1}}{b} \mathlarger{\mathlarger{\mathlarger{ ) }}}   \mathlarger{\mathlarger{\mathlarger{\}}}}   =\g,
	\end{align*}
	which contradicts the fact that $\g\leq\g_1$. The proposition now follows.
\end{proof}

%===================================================================

\section{Examples}\label{Sect egs}

Theorem \ref{Thm non ruled necessary} gives necessary conditions for the residue field extension $Kv|kv$ to be non-ruled. However, the conditions are not sufficient, as illustrated by the next few examples.

\begin{Example}\label{Eg ruled Gauss}
	Let $k = \QQ(\zeta_p)$ where $p$ is an odd prime and $\zeta_p$ is a primitive $p$-th root of unity. Let $K= k(X)(\sqrt[p]{qX^p+1})$ where $q$ is a prime such that $q\neq p$. Thus $a=q, b=0$ and $c=1$. Take the $q$-adic valuation $v:= v_q$ on $\QQ$. Then $v\QQ = \ZZ$. Observe that
	\[ \dfrac{1}{p-1} v(\dfrac{b}{a}) = \infty > -\dfrac{1}{p} = \dfrac{1}{p} v(\dfrac{1}{q}) = \dfrac{1}{p} v(\dfrac{c}{a}). \]
	Take an extension of $v$ to $k$. Take the Gau{\ss} valuation extending $v$ to $k(X)$ with $\g := vX = -\dfrac{1}{p}$, and extend $v$ to $K$. Observe that $\g$ has order $p$ modulo $v\QQ$. Suppose that $\g\in vk$. Then $(vk:v\QQ) \geq p$. As a consequence of the Fundamental Inequality, we then obtain that $p\leq (vk:v\QQ) \leq [k:\QQ] = p-1$, which yields a contradiction. It follows that $\g\notin vk$. Thus $vk+\ZZ\g \neq vk$. The residue field extension $Kv|kv$ is then a ruled extension by Proposition \ref{Prop characterise non-ruled}. Observe that in this example condition $(i)$ of Proposition \ref{Prop characterise non-ruled} is not satisfied while condition $(ii)$ is satisfied.
\end{Example}

\begin{Example}
	Let $(k,v)$ be as in Example \ref{Eg ruled Gauss} and $K= k(X)(\sqrt[p]{X^p+q})$. Then $a=1, b=0$ and $c=q$. Observe that 
	\[ \dfrac{1}{p-1} v(\dfrac{b}{a}) = \infty > \dfrac{1}{p} = \dfrac{1}{p} v(\dfrac{c}{a}) = \dfrac{1}{p} vc.  \]
	Take the Gau{\ss} valuation extending $v$ to $k(X)$ with $vX = \dfrac{1}{p}$ and extend $v$ to $K$. We have observed in Example \ref{Eg ruled Gauss} that $\dfrac{1}{p} \notin vk$. Thus condition $(ii)$ of Proposition \ref{Prop characterise non-ruled} is not satisfied, and hence the residue field extension $Kv|kv$ is ruled. Observe that condition $(i)$ of Proposition \ref{Prop characterise non-ruled} is satisfied in this case.
\end{Example}

\begin{Remark}
	It is not possible to construct an example of ruled residue field extensions where both conditions $(i)$ and $(ii)$ of Proposition \ref{Prop characterise non-ruled} are not satisfied. Indeed, the failure of condition $(ii)$ implies that $\dfrac{1}{p} v(\dfrac{c}{a}) \notin vk$ but $\dfrac{1}{p} va \in vk$. This necessarily implies that $vk + \ZZ \dfrac{1}{p} v(\dfrac{c}{a}) = vk+ \ZZ(\dfrac{1}{p} vc  - \dfrac{1}{p} va) = vk + \ZZ\dfrac{1}{p}vc$, that is, condition $(i)$ is satisfied.
\end{Remark}

\begin{Example}
	Let $(k,v)$ be as in Example \ref{Eg ruled Gauss} and $K= k(X)(\sqrt[p]{X^p+1})$. Then $a=c=1$ and $b=0$. Observe that 
	\[ \dfrac{1}{p-1} v(\dfrac{b}{a}) = \infty > 0 = \dfrac{1}{p} v(\dfrac{c}{a}).  \]
	Take the Gau{\ss} valuation extending $v$ to $k(X)$ with $vX = 0$ and extend $v$ to $K$. Then both conditions $(i)$ and $(ii)$ of Proposition \ref{Prop characterise non-ruled} are satisfied and hence the residue field extension $Kv|kv$ is non-ruled. 
\end{Example}

\begin{Example}\label{Eg ruled not Gauss}
	Take the valued field $k:= \QQ(\zeta_p)(T)$ equipped with the $T$-adic valuation, where $\zeta_p$ is a primitive $p$-th root of unity and $T$ is transcendental over $\QQ(\zeta_p)$. Then $vk=\ZZ$. Set
	\[ K:= k(X) (\sqrt[p]{T^pX^p - pTX + T^p+p-1}). \]
	Thus $a = T^p$, $b = -pT$ and $c = T^p+p-1$. Observe that $\dfrac{1}{p-1} v(\dfrac{b}{a}) = -1 = \dfrac{1}{p} v(\dfrac{c}{a})$. Take the valuation extending $v$ to $k(X)$ with the minimal pair of definition $(\dfrac{1}{T}, \dfrac{p}{2}-1)$ and extend it to $K$. Then $v(X-\dfrac{1}{T}) = \dfrac{p}{2}-1 > -1$ and hence $vX = v\dfrac{1}{T} =-1$ by the triangle inequality. Thus 
	\[ vX = \dfrac{1}{p-1} v(\dfrac{b}{a}) = \dfrac{1}{p} v(\dfrac{c}{a}). \]
	Setting $Y:= X - \dfrac{1}{T}$, we then observe that $vY$ has order $2$ modulo $vk$. It now follows from [\ref{APZ characterization of residual trans extns}, Corollary 2.3] that $k(X)v = kv\, (Z)$ where $Z:= \dfrac{Y^2}{T^{p-2}}v$. Further, observe that we can write 
	\[ K = k(X) \mathlarger{\mathlarger{\mathlarger{(}}}  \sqrt[p]{\dfrac{T^pX^p - pTX + T^p+p-1}{T^{p}}}  \mathlarger{\mathlarger{\mathlarger{)}}}. \]
	Expanding the numerator within the radical sign in terms of $Y$, we obtain
	\begin{equation}\label{eqn 28}
		T^pX^p - pTX + T^p+p-1 = \sum_{i=0}^{p-2} \binom{p}{i} T^{p-i}Y^{p-i} + T^p.
	\end{equation}
	Observe that $v \binom{p}{i} T^{p-i}Y^{p-i} = \dfrac{p(p-i)}{2} \geq p = vT^p$ where equality holds only when $i=p-2$. Thus $v \dfrac{T^pX^p - pTX + T^p+p-1}{T^{p}} = 0$. Moreover,  $\sqrt[p]{\dfrac{T^pX^p - pTX + T^p+p-1}{T^{p}}v} \in Kv$. It follows from the expression in (\ref{eqn 28}) that
	\[ \dfrac{T^pX^p - pTX + T^p+p-1}{T^{p}}v = \binom{p}{p-2} \dfrac{Y^2}{T^{p-2}}v + 1 = nZ +1, \]
	where $n:= \binom{p}{p-2} \in \ZZ_{\geq 1}$. As a consequence, we have the chain
	\[ kv\, (Z) \subsetneq kv \, (Z, \sqrt[p]{nZ+1}) \subseteq Kv.  \]
	Observe that $kv\, (Z, \sqrt[p]{nZ+1}) = kv\, (\sqrt[p]{nZ+1})$. Since $[Kv:k(X)v] \leq [K:k(X)]=p = [kv \, (\sqrt[p]{nZ+1}):kv\, (Z)]$, we conclude that $Kv = kv\, (\sqrt[p]{nZ+1})$ and hence $Kv|kv$ is a ruled extension.
\end{Example}

Modifying Example \ref{Eg ruled not Gauss} slightly, we construct examples of non-ruled residue field extensions such that $v$ is not a Gau{\ss} valuation. 

\begin{Example}\label{Eg not ruled not Gauss}
	Let $(k,v)$ be as in Example \ref{Eg ruled not Gauss}. Further, assume that $p$ is an odd prime. Set $K:= k(X)(\sqrt[p]{(T^p-T^{2p})X^p - pTX + T^p+p-1})$. Thus $a = T^p-T^{2p}$, $b = -pT$ and $c = T^p+p-1$. Observe that $\dfrac{1}{p-1} v(\dfrac{b}{a}) = -1 = \dfrac{1}{p} v(\dfrac{c}{a})$. Take the extension of $v$ to $k(X)$ with the minimal pair $(\dfrac{1}{T}, p-1)$ and extend it to $K$. The fact that $v\dfrac{1}{T} = -1 < p-1$ implies that this is not a Gau{\ss} valuation. Moreover, it follows from the triangle inequality that $vX = v\dfrac{1}{T} = -1$. Thus
	\[ vX = \dfrac{1}{p-1} v(\dfrac{b}{a}) = \dfrac{1}{p} v(\dfrac{c}{a}).  \]
	Set $Y:= X-\dfrac{1}{T}$. It follows from [\ref{APZ characterization of residual trans extns}, Corollary 2.3] that $k(X)v = kv\, (Z)$ where $Z:= \dfrac{Y}{T^{p-1}}v$. 
	Observe that
	\[ K = k(X) \mathlarger{\mathlarger{\mathlarger{(}}}  \sqrt[p]{\dfrac{(T^p-T^{2p})X^p - pTX + T^p+p-1}{T^{2p}}}  \mathlarger{\mathlarger{\mathlarger{)}}}. \]
	Expanding $(T^p-T^{2p})X^p - pTX + T^p+p-1$ in terms of $Y$, we obtain that
	\begin{equation}\label{eqn 29}
		(T^p-T^{2p})X^p - pTX + T^p+p-1 =  \sum_{i=0}^{p-2} \binom{p}{i} (T^{p-i} - T^{2p-i})Y^{p-i} - pT^{p+1}Y.
	\end{equation}
	Observe that $v \binom{p}{i} (T^{p-i} - T^{2p-i})Y^{p-i} = p(p-i) \geq 2p = v (pT^{p+1}Y) = vT^{2p}$ for all $i\leq p-2$, where the equality holds only for $i = p-2$. It follows that $v\dfrac{(T^p-T^{2p})X^p - pTX + T^p+p-1}{T^{2p}} = 0$. Moreover, we obtain from the expression in (\ref{eqn 29}) that
	\[ \dfrac{(T^p-T^{2p})X^p - pTX + T^p+p-1}{T^{2p}}v = \binom{p}{p-2} \dfrac{Y^2}{T^{2p-2}}v - p\dfrac{Y}{T^{p-1}}v = nZ^2 -pZ, \]
	where $n:= \binom{p}{p-2}\in\ZZ_{\geq 1}$. We have thus obtained a chain
	\[ k(X)v = kv\, (Z) \subsetneq kv \, (Z, \sqrt[p]{nZ^2 -pZ}) \subseteq Kv. \]
	Since $[Kv:k(X)v] \leq [K:k(X)]=p=[kv \, (Z, \sqrt[p]{nZ^2 -pZ}):kv\, (Z)]$, we conclude that $Kv = kv \, (Z, \sqrt[p]{nZ^2-pZ})$. Applying Lemma \ref{Lemma non ruled char 0} we then obtain that $Kv|kv$ is not ruled.
\end{Example}

%Using the same setup of Example \ref{Eg not ruled not Gauss} we now construct an extension of $v$ to $K$ which is a Gau{\ss} valuation such that $Kv|kv$ is non-ruled. This illustrates that the residue field extension can be non-ruled with respect to two distinct extensions of $v$ to $K$. 

%\begin{Example}
%	Let $(k,v)$ and $K$ be as in Example \ref{Eg not ruled not Gauss}. Take the Gau{\ss} valuation extending $v$ to $k(X)$ with $vX:= -1$ and take an extension of $v$ to $K$. Observe that $vTX=0$ and hence $k(X)v = kv \, (TXv)$ by [\ref{APZ characterization of residual trans extns}, Theorem 2.1]. Set $Z:= TXv$. Observe that $(T^pX^p - pTX + T^{2p}+p-1)v = T^pX^pv - pTXv + (T^{2p}+p-1)v = Z^p - pZ + p-1$. We thus have a chain
%	\[ k(X)v = kv \, (Z) \subseteq kv \, (Z, \sqrt[p]{Z^p - pZ + p-1}) \subseteq Kv. \]
%	It follows that $Kv = kv \, (Z, \sqrt[p]{Z^p - pZ + p-1})$ and hence $Kv|kv$ is not a ruled extension. 
%\end{Example}

%=====================================================

\section*{Acknowledgements}
 
Preliminary versions of this paper were written when the author was a Post-Doctoral Fellow (supported by the Post-Doctoral Fellowship of the National Board of Higher Mathematics, India) in IISER Mohali. The author is presently supported by the Seed grant F.28-3(15)/2021-22/SP114 from IIT Bhubaneswar. The author would like to thank the anonymous referee for their many insightful suggestions and comments which has improved the readability and exposition of the paper. 

%============================================================================

\end{document}